\numberwithin{equation}{section}
\newtheorem{theorem}{Theorem}[section]
\newtheorem{proposition}[theorem]{Proposition}
\newtheorem{conjecture}[theorem]{Conjecture}
\newtheorem{corollary}[theorem]{Corollary}
\newtheorem{lemma}[theorem]{Lemma}
\theoremstyle{definition}
{
\newtheorem{remark}[theorem]{Remark}

\newtheorem{defn}[theorem]{Definition}}}
\newcommand{\cal}{\mathcal}
\newcommand{\BB}{{\cal B}}
\newcommand{\FF}{{\cal F}}
\newcommand{\GG}{{\cal G}}
\newcommand{\PP}{{\cal P}}
\newcommand{\RR}{{\cal R}}
\newcommand{\XX}{{\cal X}}
\newcommand{\SRB}{\mathscr{E}}
\newcommand{\Nn}{{\mathbb{N}}}
\newcommand{\Pp}{{\mathbb{P}}}
\newcommand{\Rr}{{\mathbb{R}}}
\def\supp{\operatorname{supp}}
\def\Vol{\operatorname{Vol}}
\newcommand{\comment}[1]{}
\begin{document}
\title[Hyperbolic polygonal billiard]{Hyperbolic polygonal billiards close to 1-dimensional piecewise expanding maps}

\date{\today}

\author[Del Magno]{Gianluigi Del Magno}
\address{Universidade Federal da Bahia, Instituto de Matem\'atica\\
Avenida Adhemar de Barros, Ondina \\
40.170.110 - Salvador, BA - Brasil}
\email{gdelmagno@ufba.br}

\author[Lopes Dias]{Jo\~ao Lopes Dias}
\address{Departamento de Matem\'atica, CEMAPRE and REM, ISEG\\
Universidade de Lisboa\\
Rua do Quelhas 6, 1200-781 Lisboa, Portugal}
\email{jldias@iseg.ulisboa.pt}

\author[Duarte]{Pedro Duarte}
\address{Departamento de Matem\'atica and CMAF \\
Faculdade de Ci\^encias\\
Universidade de Lisboa\\
Campo Grande, Edificio C6, Piso 2\\
1749-016 Lisboa, Portugal 
}
\email{pmduarte@fc.ul.pt}

\author[Gaiv\~ao]{Jos\'e Pedro Gaiv\~ao}
\address{Departamento de Matem\'atica, CEMAPRE and REM, ISEG\\
Universidade de Lisboa\\
Rua do Quelhas 6, 1200-781 Lisboa, Portugal}
\email{jpgaivao@iseg.ulisboa.pt}

\begin{abstract}
We consider polygonal billiards with collisions contracting the reflection angle towards the normal to the boundary of the table. In previous work, we proved that such billiards has a finite number of ergodic SRB measures supported on hyperbolic generalized attractors. Here we study the relation of these measures with the ergodic absolutely continuous invariant probability measures (acips) of the slap map, the 1-dimensional map obtained from the billiard map when the angle of reflection is always equal to zero. Our main result states that for a generic polygon, if the reflection law has a Lipschitz constant sufficiently small, then there exists a one-to-one correspondence between the ergodic SRB measures of the billiard map and the ergodic acips of the corresponding slap map, and moreover that the number of Bernoulli components of each ergodic SRB measure equals the number of the exact components of the corresponding ergodic acip. The case of billiards in regular polygons and triangles is studied in detail.
\end{abstract}

\maketitle
\tableofcontents

\nocite{*}

\section{Introduction}
\label{sec:introduction}
The dynamics of billiards has been studied in great detail when the reflection law is the specular one: the angle of reflection equals the angle of incidence. For an account on the subject, we refer the reader to~\cite{G96,S00,tabachnikov95}. In a series of recent works, we studied polygonal billiards with a reflection law, i.e., a function $ f $ describing the dependence of the angle of reflection from the angle of incidence -- both measured with respect to the normal of the billiard table -- that is not the identity function as for the specular reflection law, but a strict contraction having the zero angle as its fixed point (see Fig.~\ref{fig:reflection law}). 

The dynamics of polygonal billiards with contracting reflection laws differ significantly from that of polygonal billiards with specular reflection law: whereas the latter are non-hyperbolic systems, the former generically have uniformly hyperbolic attractors supporting a finite number of ergodic Sinai-Ruelle-Bowen measures (SRB measures for short)~\cite{MDDGP12,MDDGP13,lxds}. Some billiards in non-polygonal tables with non-specular reflection laws were studied in~\cite{arroyo09,arroyo12,markarian10}. 

In the degenerate case when the function $ f $ is identically equal to 0, i.e., when the angle of reflection $ \theta $ is identically equal to zero, the billiard map is no longer injective and its image is a 1-dimensional set. The restriction of the billiard map to this subset is a piecewise affine maps of the interval, which we call \emph{slap map} following~\cite{markarian10}. The precise form of the slap map depends only on the polygonal table of the billiard. If a polygon does not have parallel sides (in fact, a weaker condition introduced later on suffices), then the corresponding slap map is uniformly expanding, and admits a finite number of ergodic absolutely continuous invariant probabilities  (acips for short)~\cite{MDDGP13-1}.

Given a polygon $ P $ and a contracting reflection law $ f $, we denote by $ \Phi_{f,P} $ the map of the billiard in $ P $ with reflection law $ f $, and by $ \psi_P $ the slap map of $ P $. Precise definitions will be given in Section~\ref{se:polygonal}. The Lipschitz constant $ \lambda(f) $ of $ f $ may be thought as a measure of how the 2-dimensional map $ \Phi_{f,P} $ is close to the 1-dimensional map $ \psi_{P} $. More precisely, $ \lambda(f) $ measures how close $ \Phi_{f,P} $ is to $ \Phi_{0,P} $ (here $ f \equiv 0 $), but the image of $ \Phi_{0,P} $ is 1-dimensional and the restriction of $ \Phi_{0,P} $ to its image coincides with $ \psi_P $. In this paper, we address the natural question `what is the relation between the properties of $ \Phi_{f,Q} $ and $ \psi_{P} $ when $ \lambda(f) $ is small and the polygon $ Q $ is close to $ P $?'. In particular, we study the relation between the ergodic SRB measures of $ \Phi_{f,Q} $ and the ergodic acips of $ \psi_{P} $.

The results presented here are formulated for convex polygons only. Analogous results can be obtained for some classes of non-convex polygons, but the analysis would be much more involved than for convex polygons.
Our standing assumption is that a convex polygon $ P $ satisfies a condition called Condition~(*): every pair of billiard trajectories starting at a non-acute vertex $ v $ of $ P $, each moving in the direction perpendicular to one of the sides meeting at $ v $ do not visit any vertex of $ P $ and do not eventually become periodic (see~Section~\ref{sec:ergodic stability}). 

The main result of the paper is the following theorem. Its formulation and the subsequent statements in this section requires a distance $ d $ and a measure $ m $ on the space $ \PP_n $ of convex polygons with $ n $ sides. Their definitions are postponed to Section~\ref{sec:ergodic stability}. 

\begin{theorem}
\label{th:main}
Let $ P \in \PP_n $ be a polygon satisfying Condition~(*). There exists $ \delta>0 $ such that if $ f $ is a contracting reflection law that is a $ C^2 $ embedding with $ \lambda(f)<\delta $ and 
$ Q $ is a polygon of $ \PP_n $ with $ d(Q,P)<\delta $, then
there is a bijection between the set of the ergodic SRB measures of $ \Phi_{f,Q} $ and the set of the ergodic acips of $ \psi_P$. Moreover, 
\begin{enumerate}
\item the supports of the ergodic SRB measures of $ \Phi_{f,Q} $ are pairwise disjoint,
\item the number of the ergodic SRB measures of $ \Phi_{f,Q} $ is less than or equal to $ n $, 
\item the number of Bernoulli components of each ergodic SRB measure of $ \Phi_{f,Q} $ equals the number of exact components of the corresponding ergodic acip of $ \psi_P $,
\item the union of the basins of the ergodic SRB measures of $ \Phi_{f,Q} $ is a set of full volume in the domain of $ \Phi_{f,Q} $.
\end{enumerate}
\end{theorem} 

The previous result raises the natural question whether each ergodic SRB measures of $ \Phi_{f,Q} $ converges to the corresponding acip of $ \psi_P $ in the weak-* topology. We believe that the answer is positive, but we do not address that question in this paper. Important results on the existence and stability of physical measures of certain 2-dimensional hyperbolic maps with singularities were obtained by Demers and Liverani using a functional analytic approach in~\cite{DL}.  
Their results do not apply -- at least directly -- to the maps $ \Phi_{f,Q} $ in Theorem~\ref{th:main}, because both the unperturbed and perturbed maps are assume to be piecewise diffeomorphisms in~\cite{DL}, whereas it can be easily seen that the unperturbed map $ \Phi_{0,P} $ of $ \Phi_{f,Q} $ is not a piecewise diffeomorphism. Perhaps, it is possible to adapt the approach of~\cite{DL} to our setting following~\cite{D2018} where dissipative Baker's map and its limit map as the contraction coefficient converges to zero are considered. We have not tried to do that, and opted for a geometrical approach instead. 


Condition~(*) plays a major role in our analysis, and so it is important to know whether the set of polygons satisfying Condition~(*) is large in the topological and measure theoretical sense. In this regard, we proved the following (see Theorem~\ref{le:generic}).

\begin{theorem}
The set of polygons in $ \PP_n $ satisfying Condition~(*) is a residual set in $ \PP_n $ equipped with the distance $ d $ and full measure set in $ \PP_n $ equipped with the measure $m$.
\end{theorem}

Combining the previous theorems, one obtains that the set of polygons for which the conclusion of Theorem~\ref{th:main} holds is generic and has full measure in $ \PP_n $.

%




An obvious consequence of Condition~(*) is that $ \psi_P $ and every map $ \Phi_{f,Q} $ sufficiently close to $ \psi_P $ are hyperbolic. The general strategy of the proof of Theorem~\ref{th:main} is as follows. For every ergodic acip $ \nu $ of $ \psi_P $, we construct a trapping set $ W(\nu) $ common to all $ \Phi_{f,Q} $ sufficiently close to $ \psi_P $. We then consider one of the maps $ \Phi_{f,Q} $, and to establish the bijection between the ergodic acips of $ \psi_P $ and the ergodic SRB measures of $ \Phi_{f,Q} $, we prove that the support of each ergodic SRB measure of $ \Phi_{f,Q} $ is contained in a trapping set $ W(\nu) $ for some ergodic acip $ \nu $ of $ \psi_P $, and that for every ergodic acip $ \nu $ of $ \psi_P $, the corresponding trapping set $ W(\nu) $ contains the support of exactly one ergodic SRB measure of $ \Phi_{f,Q} $.
An interesting feature of the previous proof is that it is built upon properties of periodic points of the slap map $ \psi_P $ that can be transferred to periodic points of billiard maps $ \Phi_{f,Q} $ sufficiently close to $ \psi_P $. In Section~\ref{se:preliminary}, we collect several results that are needed to prove Theorem~\ref{th:main}. One of these results is a criterion for the ergodicity of SRB measures of $ \Phi_{f,Q} $ using the periodic points of $ \Phi_{f,Q} $. This criterion can be easily generalized to general hyperbolic map with singularities. A more detail description of the general structure of the proof Theorem~\ref{th:main} is provided in Section~\ref{sec:ergodic stability}.

We believe that our proof of Theorem~\ref{th:main} can be adapted to cover a quite general class of two-dimensional hyperbolic maps with singularities close in a proper sense to piecewise expanding 1-dimensional maps.

In the last section of the paper, we apply Theorem~\ref{th:main} to billiards in convex regular polygons and triangles, obtaining the following. 

\begin{corollary}
\label{co:regular}
Let $ P $ be a convex regular polygon with an odd number $ n $ sides. There exists $ \delta>0 $ such that if $ f $ is a contracting reflection law that is a $ C^2 $ embedding with $ \lambda(f)<\delta $ and $ Q \in \PP_n $ with $ d(Q,P)<\delta $, then the billiard map $ \Phi_{f,Q} $ has a unique ergodic SRB measure if $ n=3 $ or $ n=5 $, and exactly $ n $ ergodic SRB measures if $ n \ge 7 $.
\end{corollary}

%

\begin{corollary}
\label{co:acute}
For every acute triangle $P$, there exists $ \delta>0 $ such that if $ f $ is a contracting reflection law that is a $ C^2 $ embedding with $ \lambda(f)<\delta $, then the billiard map $ \Phi_{f,P} $ has a unique ergodic SRB measure. Moreover, this measure has a single Bernoulli component.
\end{corollary}


The paper is organized as follows. In Section~\ref{se:polygonal}, we define the objects studied in this paper: the billiard map for a polygonal billiard with a contracting reflection law and the related slap map. In Section~\ref{se:hyperbolic}, we give a sufficient condition for the existence of hyperbolic attractors of a billiard map, and recall the basic notions of Pesin's Theory specialized to our billiards. We also recall a general result on the  existence and the spectral decomposition of absolute invariant probabilities of piecewise expanding maps, which applies to the slap maps considered in this paper. Section~\ref{sec:ergodic stability} is devoted to the precise formulation of Condition~(*) and of the principal results of the paper. Besides giving a precise definition of the topology and measure in the space $\PP_n$, Section~\ref{sec:ergodic stability}  contains also an outline of the proof of Theorem~\ref{th:main} meant to guide the reader through it. The preliminary results necessary to prove the theorem are collected in Section~\ref{se:preliminary}, whereas the final part of its proof is contained in Section~\ref{sec:ergodic stability}. In Section~\ref{sec:examples}, we apply our general results to several classes of polygons, and prove Corollaries~\ref{co:regular} and~\ref{co:acute}.
\section{Billiard map and slap map}
\label{se:polygonal}

A billiard in a polygon $ P $ is a mechanical system formed by a point-particle moving with uniform motion inside $ P $ and bouncing off the boundary $ \partial P $ according to a given rule, which is a function called \emph{reflection law} whose argument and value are, respectively, the angle of incidence and the angle of reflection of the particle at the collision point. In the usual definition of a billiard, the reflection law is the specular one prescribing the equality between the angle of reflection and the angle of incidence. In this paper, we consider reflection laws that are strict contractions with small (in a sense that will be explained later) Lipschitz constant.

\subsection{Polygonal billiards}
\label{su:billiards} 
Let $ P $ be a convex polygon with $ n $ sides and perimeter equal to 1. We choose a positively oriented parametrization of $ \partial P $ by arc length so that $ 0 = s_{0} < s_1< \cdots < s_{n-1} < s_n = 1 $ are the values of the arc length parameter corresponding to the vertices of $ P $. The values $ s=0 $ and $ s=1 $ correspond to the same vertex of $ P $.
In the following, we identify the points of $ \partial P $ with their arc length parameter $ s $ (with the additional proviso that $ s=0 $ and $ s=1 $ denote the same point). In other words, we identify $ \partial P $ with the circle $ S^1 $ of perimeter 1. Denote by $ V_P = \{s_0,\ldots,s_{n-1}\} $ the set of the vertices of $ P $.

Let $ M = S^1 \times (-\pi/2,\pi/2) $. We denote by $ d_{S^1} $ the standard distance on $ S^1 $, and by $ d_{M} $ the Euclidean distance of the cylinder $ M $. Also, we denote by $ \Vol $ the volume generated by $ d_M $ on $ M $, and by $ \|\cdot\| $ the Euclidean norm of $ \Rr^2 $. Finally, we denote by $ \pi_s $ and $ \pi_\theta $ the projections defined by $ \pi_s(s,\theta)=s $ and $ \pi_{\theta}(s,\theta)=\theta $ for $ (s,\theta) \in M $. A curve $ \Gamma \subset M $ is called a \emph{horizontal segment} if $ \pi_\theta(\Gamma)=const. $.

Let $ M_P = \bigcup^{n-1}_{i=0} (s_i,s_{i+1}) \times (-\pi/2,\pi/2) \subset M $. We associate to each element $ (s,\theta) \in M_P $ the unit vector $ v $ of $ \Rr^2 $ with base point $ s $ making an angle $ \theta $ with the inner normal to $ \partial P $ at $ s $. Such a normal is not defined at the vertices of $ P $, which is the reason for not including the set $ \bigcup^{n-1}_{i=0} \{s_i\} \times (-\pi/2,\pi/2) $ in $ M_P $. Each pair $ (s,\theta) \in M_P $ specifies the state of the particle immediately after a collision with $ \partial P $: the collision point is given by $ s $ and the velocity is given by the unit vector $ v $. 
 


Given a particle in the state $ (s,\theta) \in M_P $, 
we define $ g_P(s,\theta) $ to be the next point of collision of the particle with $ \partial P $. Let $ Y_P = \{s_{0},\ldots,s_{n-1}\} \times (-\pi/2,\pi/2) $, and let $ N_P = g^{-1}_P(Y_P) $. The \emph{standard billiard map for the polygon $ P $} is the map $ \Phi_P \colon M_P \setminus N_P \to M_P $ whose image $ \Phi_{P}(s,\theta) \in M_P $ corresponds to the state of the particle right after the collision at $ g_P(s,\theta) $ when the reflection law is the specular one. We denote by $ h_P(s,\theta) $  the angle of the particle after the collision. Hence, $ \Phi_P(s,\theta)=\left(g_P(s,\theta),h_P(s,\theta)\right) $. It is not difficult to see that $ \Phi_P $ is piecewise analytic. For a detailed definition of the map $ \Phi_P $, we refer the reader to~\cite{cm06}.


\subsection{Contracting reflection laws}
A reflection law is a function 
\[ 
f \colon (-\pi/2,\pi/2) \to (-\pi/2,\pi/2). 
\] 
For instance, the specular reflection law corresponds to the identity function $ f(\theta)=\theta $. Given a reflection law $ f $, denote by $ R_{f} \colon M_P \to M_P $ the map $ R_{f}(s,\theta)=(s,f(\theta)) $.
The \emph{billiard map for the polygon $ P $ with reflection law $ f $} is the transformation $\Phi_{f,P} \colon M_P \setminus N_P \to M_P $ defined by
\[
\Phi_{f,P} = R_{f} \circ \Phi_{P} = \left(g_P(s,\theta),f \circ h_P(s,\theta)\right). 
\]
Note that $ \Phi_{f,P} $ is injective if and only if $ f $ is, and that $ \Phi_{f,P} $ is a $ C^k $, $ k>0 $ diffeomorphism onto its image of if and only if $ f $ is. 

The differential $ d_{x} \Phi_{f,P} $ is given by~\cite[Section~2.5]{MDDGP13}
\begin{equation}
\label{eq:differential}	
d_{x} \Phi_{f,P} 
= 
- \begin{pmatrix} 
\dfrac{\cos \theta}{\cos(h_{P}(s,\theta))} & \dfrac{t_P(s,\theta)}{\cos(h_{P}(s,\theta))} \\[15pt]
0 & f'(h_{P}(s,\theta))
\end{pmatrix}.
\end{equation}
 	
\begin{defn}
A reflection law $ f $ is called \emph{contracting} if $ f $ is of class $ C^1 $, $ f(0)=0 $ and $ \lambda(f) := \sup\{|f'(\theta)| \colon \theta \in (-\pi/2,\pi/2) \} < 1 $. 
\end{defn}

The simplest example of a contracting reflection law is $ f(\theta) = \sigma \theta $ with $0 < \sigma < 1$ (see Fig.~\ref{fig:reflection law}). This law was considered in several papers~\cite{arroyo09,arroyo12,MDDGP12,markarian10}. 

\begin{figure}[t]
\begin{center}
\includegraphics[scale=0.6]{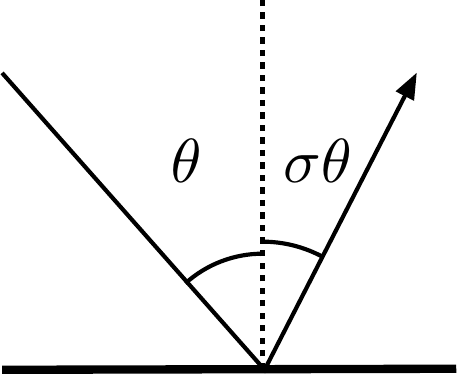}
\caption{Linear contracting reflection law $f(\theta)=\sigma\theta$, with $0<\sigma<1$.}
\label{fig:reflection law}
\end{center}
\end{figure}

We denote by $ \mathcal{R} $ the set of all contracting reflection laws. It is easy to verify that $ \mathcal{R} $ is a Banach space with the norm $ \lambda(f) $. We denote by $ \mathcal{R}^k $, $ k \ge 1 $ the set all contracting reflection laws that are $ C^k $ diffeomorphisms onto their images. 
The reflection law $ f \equiv 0 $ is denoted by $ 0 $. 


In order to apply Pesin's theory to $ \Phi_{f,P} \colon M_P \setminus N_P \to M_P $ and to establish the existence of stable and unstable local manifolds, $ \Phi_{f,P} $ has to be a $ C^2 $ diffeomorphism onto its image $ \Phi_{f,P}(M_P \setminus N_P) $, which is the case if $ f \in \mathcal{R}^2 $.


\subsection{Slap maps}
When $ f = 0 $, the billiard trajectories in $ P $ are all orthogonal to $ \partial P $ after every collision. Thus, the image of the map $ \Phi_{0,P} $ is a subset of the segment $ S^1 \times \{0\} $. If $ f \in \RR $ and $ \lambda(f) $ is sufficiently small, then $ \Phi_{f,P} $ can be considered as a small perturbation of $ \Phi_{0,P} $. Indeed, the two maps have the same domain $ M_P \setminus N_P $, and from the definition of $ \Phi_{f,P} $ and the expression of $ d_x \Phi_{f,P} $, it follows that $ \Phi_{f,P} $ and $ \Phi_{0,P} $ are $ \lambda(f) $-close in the $ C^1 $ topology. 

We now introduce a 1-dimensional map related to $ \Phi_{0,P} $. First, let $ I_P = \bigcup^{n-1}_{i=0} (s_i,s_{i+1}) \subset S^1 $, 
and define $ F_P \colon I_P \to S^1 $ by $ F_P(s) = g_P(s,0) $ for all $ s \in I_P $. 
The map $ F_P $ is related to $ \Phi_{0,P} $, since $ \Phi^n_{0,P}(s,0) = (F^n_P(s),0) $ for all $ (s,0) \in M_P \setminus N_P $ and all $ n \in \Nn $. Moreover, $ F_P $ is affine and strictly decreasing on each connected component of $ I_P $. For this reason, $ F_P $  admits a unique extension to the whole $ S^1 $ that is left\footnote{We might have as well chosen the extension to be right continuous.} continuous at each point $ s_0,s_1,\ldots,s_{n-1} $. We denote such an extension by $ \psi_P \colon S^1 \to S^1 $, and call it the \emph{slap map} of $ P $. See Fig.~\ref{fig:slap}.

\begin{figure}[t]
\begin{center}
\includegraphics[scale=0.15]{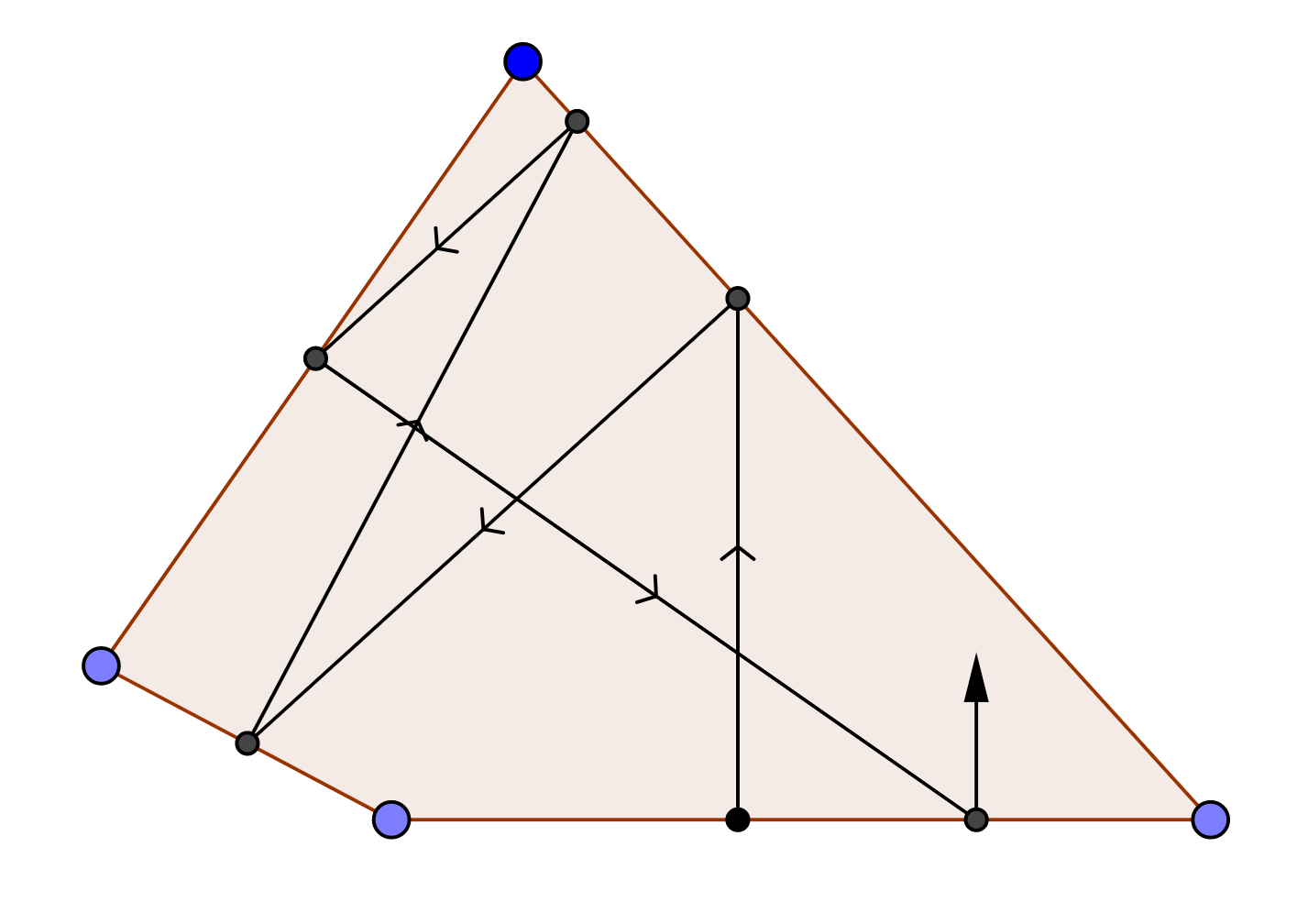}
\caption{Billiard trajectory of a slap map.}
\label{fig:slap}
\end{center}
\end{figure}

The \emph{singular set} of a piecewise expanding map is the set of point where the map does not have continuous second derivatives. 
It is not difficult to see that if $ \psi_P $ is analytic at $ s_i $ if and only if $ s_i $ is a vertex with an acute internal angle. In fact, in that case, $ \psi_P(s_i)=s_i $. Accordingly, the singular set $ S_P $ of $ \psi_P $ is the set of all non-acute vertices of $ P $. 


\section{Hyperbolic polygonal billiards}
\label{se:hyperbolic}


\subsection{Hyperbolic attractors}


Let $ P $ a polygon. For every $ f \in \RR $ with $ f \neq 0 $, define 
\[ 
K_{f,P} = \left\{(s,\theta) \in M_P \colon |\theta| < \lambda(f) \dfrac{\pi}{2} \right\}.
\]
For the special case $ f=0 $, define $ K_{0,P} = \bigcup^{d-1}_{i=0} (s_i,s_{i+1}) \times \{0\} $, and conventionally choose the boundary of $ K_{0,P} $ to be the empty set, that is, $ \partial K_{0,P} = \emptyset $. Since $ f $ is a contraction, the set $ K_{f,P} $ is forward invariant 
\[ 
\Phi_{f,P}(K_{f,P} \setminus N_P) \subset K_{f,P}.
\] 
From now on, we will focus our attention to the restriction of $ \Phi_{f,P} $ to $ K_{f,P} \setminus N_P $,
which by abuse of notation we still denote by $ \Phi_{f,P} $.

Not every every element of $ K_{f,P} \setminus N_P $ can be iterated indefinitely due to the set $ N_P $. The set of all elements of $ K_{f,P} \setminus N_P $ with positive semi-orbit is
\[
K^{+}_{f,P} := \left\{(s,\theta) \in K_{f,P} \colon \Phi_{f,P}(s,\theta) \notin N_P \,\, \forall n \ge 0\right\}.
\] 
Then the maximal forward invariant set of $ \Phi_{f,P} $ is
\[
D_{f,P} :=\bigcap_{n \ge 0} \Phi^{n}_{f,P}(K^{+}_{f,P}).
\]
Note that if $ f \in \RR^1 $, then $ D_{f,P} $ is also the maximal invariant set of $ \Phi_{f,P} $, meaning that $ \Phi^{-1}_{f,P}(D_{f,P}) = D_{f,P} $. Following~\cite{Pesin92}, we call 
\[ 
\Lambda_{f,P} := \overline{D}_{f,P} 
\] 
the \emph{attractor} of $ \Phi_{f,P} $, and 
\[ 
N^+_{f,P}:=(N_P \cap K_{f,P}) \cup \partial K_{f,P} 
\] 
the \emph{singular set} of $ \Phi_{f,P} $. 

In~\cite{MDDGP13,markarian10}, it was proved that for every $ f \in \RR^1 $ and every polygon $ P $, the set $ D_{f,P} $ has a weak form of hyperbolicity called \emph{dominated splitting}. In this paper, 
we are interested in the case when $ D_{f,P} $ is a \emph{hyperbolic set}, that is, when the tangent space of $ K_{f,P} $ at each point $ x \in D_{f,P} $ splits into complementary invariant subspaces $ E^{s}(x) $ and $ E^{u}(x) $ that are uniformly contracted and expanded by the differential of $ \Phi_{f,P} $. 

\begin{defn} The attractor $ \Lambda_{f,P} $ is called \emph{hyperbolic} if $ D_{f,P} $ is a hyperbolic set.
\end{defn}

\begin{defn}
\label{de:facing}
A polygon $ P $ has \emph{parallel sides facing each other}\footnote{This notion is not exactly equal to the one given in~\cite{MDDGP13}. According to this definition arbitrarily small perturbations of a polygon without parallel sides facing each other  may have parallel sides facing each other.} if there exist parallel sides $ L_1 $ and $ L_2 $ of $ P $ and points $ q_1 $ and $ q_2 $ contained in the interior of $ L_1 $ and $ L_2 $, respectively, such that the segment joining $ q_1 $ and $ q_2 $ is contained in $ P $, intersects only the sides $ L_1 $ and $ L_2 $ of $ P $, and is perpendicular to both $ L_1 $ and $ L_2 $. 
\end{defn}  

The following proposition is proved in~\cite[Proposition~3.2 and Corollary~3.4]{MDDGP13}. 

\begin{proposition}
\label{pr:hyperbolicity}
Suppose that $ f \in \RR^1 $. Then $ \Lambda_{f,P} $ is hyperbolic if and only if $P$ does not have parallel sides facing each other. Moreover, if $ \Lambda_{f,P} $ is hyperbolic, then the unstable direction $ E^u $ coincides with the horizontal direction $ \theta = const. $ at every point of $ D_{f,P} $. 
\end{proposition}


\begin{remark}
Note that the horizontal direction is always invariant even if $ D_{f,P} $ is not hyperbolic. This peculiar property is a consequence of the fact that the angle formed by two trajectories bouncing off the same side of the polygon does not change after the collision no matter how the reflection law $ f \in \RR $ is chosen.
\end{remark}


\subsection{Pesin Theory and SRB measures}
\label{su:pesin}
In this subsection, we recall basic results on the existence of local stable and unstable manifolds for the billiard map $ \Phi_{f,P} $ and the definition of SRB measure. We assume $ f \in \RR^2 $. 

Let
\begin{multline*}
N^{-}_{f,P} = \left\{x \in K \colon \exists y \in N^{+}_{f,P} \text{ and } y_n \in K_{f,P} \setminus N^{+}_{f,P} \right. \\ \left. \text{ such that } y_n \to y \text{ and } \Phi_{f,P}(y_n) \to x \right\}.
\end{multline*}
The set $ N^{-}_{f,P} $ can be thought of as `singular set' for the inverse map $ \Phi^{-1}_{f,P} $. Next, for every $ \epsilon>0 $ and every $ l \in \Nn $, define 
\begin{align*}
D^{+}_{f,P,\epsilon,l} & = \left\{x \in \Lambda_{f,P} \cap K^+_{f,P} \colon d_{M}(\Phi^n_{f,P}(x),N^+_{f,P}) \ge \dfrac{e^{-\epsilon n}}{l} \;\;\; \forall n \ge 0 \right\}, \\
D^{-}_{f,P,\epsilon,l} & = \left\{x \in D_{f,P} \colon d_{M}(\Phi^{-n}_{f,P}(x),N^-_{f,P}) \ge \dfrac{e^{-\epsilon n}}{l} \;\;\; \forall n \ge 0 \right\}, \\
D^{0}_{f,P,\epsilon,l} & = D^{-}_{f,P,\epsilon,l} \cap D^{+}_{f,P,\epsilon,l},
\end{align*}
and 
\[
D^{\pm}_{f,P,\epsilon} = \bigcup_{l \ge 1} D^{\pm}_{f,P,\epsilon,l}, \qquad D^{0}_{f,P,\epsilon} = D^{-}_{f,P,\epsilon} \cap D^{+}_{f,P,\epsilon}.
\]
The sets $ D^{0}_{f,P,\epsilon,l} $ play the role of the regular sets 
in the Pesin theory for smooth maps~\cite{Pesin76}. 

\begin{defn}
\label{de:regular}
The attractor $ \Lambda_{f,P} $ is called \emph{regular} if there exists $ \epsilon_0>0 $ such that $ D^0_{f,P,\epsilon} \neq \emptyset $ for every $ 0<\epsilon<\epsilon_0 $. 
\end{defn}


If $ \Lambda_{f,P} $ is hyperbolic and regular, then the Pesin theory for maps with singularities~\cite{KS86} guarantees the existence of an $ \epsilon>0 $ such that a local stable manifold $ W^{s}_{loc}(x) $ exists for all $ x \in D^{+}_{f,P,\epsilon,l} $, and a local unstable manifold $ W^{u}_{loc}(x) $ exists for all $ x \in D^{-}_{f,P,\epsilon,l} $ (see~\cite[Proposition~4]{Pesin92}). The local manifolds $ W^{s}_{loc}(x) $ and $ W^{u}_{loc}(x) $ are $ C^{2} $ ($ C^k $ if $ f \in \mathcal{R}^k $, $ k \ge 2 $) embedded submanifolds whose tangent subspaces at $ x $ are equal to the stable subspace $ E^{s}(x) $ and the unstable subspace $ E^{u}(x) $, respectively. The size of these manifolds depends on the constants $ \epsilon $ and $ l $, and they form two transversal invariant laminations. Finally, we observe that for the billiard map $ \Phi_{f,P} $, the local unstable manifolds are horizontal segments.

\begin{defn}
\label{de:srb}
Suppose that $ \Lambda_{f,P} $ is hyperbolic and regular. Let $ \epsilon>0 $ be such that $ W^{s}_{loc}(x) $ exists for all $ x \in D^{+}_{f,P,\epsilon,l} $, and $ W^{u}_{loc}(x) $ exists for all $ x \in D^{-}_{f,P,\epsilon,l} $. An invariant Borel probability measure $ \mu $ on $ \Lambda_{f,P} $ is called \emph{SRB} if $ \mu(D^{0}_{f,P,\epsilon})=1 $, and the conditional measures of $ \mu $ on the local unstable manifolds of $ \Phi_{f,P} $ are absolutely continuous with respect to the Riemannian volume of the local unstable manifolds. 
\end{defn}

The precise meaning of `the conditional measures of $ \mu $ on the local unstable manifolds of $ \Phi_{f,P} $ are absolutely continuous with respect to the Riemannian volume of the local unstable manifolds' in the previous definition requires a technical explanation that can be found in~\cite{Pesin92}. We remark that what we call here an SRB measure is essentially what is called an \emph{invariant Gibbs $u$-measure} in~\cite{Pesin92}.

In Section~\ref{sec:ergodic stability}, we introduce Condition~(*), and prove that if a polygon $ P $ satisfies it, then the map $ \Phi_{f,Q} $ admits SRB measures for all polygons $ Q $ sufficiently close to $ P $ 
and for all $ f \in \RR^2 $ with $ \lambda(f) $ sufficiently small. We remark that the same result under the additional condition $ f'>0 $ follows from a general result we obtained in~\cite{lxds}. 

\subsection{Expanding slap maps}

 If $ P $ does not have parallel sides facing each other, then $ \psi_P $ is a piecewise expanding map. This means that there exists $ \sigma>1 $ such that $ |\psi'_P|>\sigma $.

By a well-known result of Lasota and Yorke~\cite{LaYo}, piecewise expanding maps have absolutely continuous invariant probability measures (for short acips). The theory of piecewise expanding maps applied to slap maps gives the following.

 
\begin{theorem}
\label{th:slap acip}
If $P$ is polygon without parallel sides facing each other, then there exist subsets $ A_1,\ldots,A_k $ of $ S^1 $ and ergodic acips $ \nu_{1},\ldots,\nu_{k} $ of $ \psi_P $ with bounded variation densities such that  that even though the results listed
\begin{enumerate}
\item $ S^1 = A_1 \cup \cdots \cup A_k $ and $ A_i \cap A_j = \emptyset $ for all $ i \neq j $; 
\item $ \psi^{-1}_P(A_i)=A_i $, $ \nu_i(A_i)=1 $ and $ \psi_P|_{A_i} $ is ergodic with respect to $ \nu_i $ for every $ i = 1,\ldots,k $;
\item for each $ i=1,\ldots,k $, there exist disjoint subsets $ A^1_i,\ldots,A^{n_i}_i $ such that for all $ i,j $,
\begin{enumerate}
\item $ A_i = A^1_i \cup \cdots \cup A^{n_i}_i $;
\item each $ A^j_i $ is $ \psi^{n_i}_P $-invariant;
\item $ \psi^{n_i}_P|_{A^j_i} $ with the normalized restriction of $ \nu_i $ to $ A^j_i $ is exact;
\item $ \supp \nu_{i} $ 
consists of finitely many pairwise disjoint intervals;
\item every open subset of $ \supp \nu_i $ contains two periodic points of $ \psi_P $ whose periods have great common divisor equal to $ n_i $. In particular, the periodic points of $ \psi_{P} $ are dense in $ \supp \nu_i $;
\end{enumerate}
\item the union of the basins of $ \nu_{1},\ldots,\nu_{k} $ has full Lebesgue measure in $ S^1 $.
\end{enumerate}
\end{theorem}

\begin{proof} 
The results cited in the references below are proved for maps of the interval $ [0,1] $. However, they continue to hold for maps of the unit circle. The existence of a finite number of ergodic acips of $ \psi_P $ and Parts~(1), (2) and (3a)-(3d) follow from the general theory of piecewise expanding maps~\cite[Theorem~7.2.3 and Theorem~8.2.2]{BG97}. Part~(3e), which is proved in~\cite[Theorem~3.14 and Proposition~3.15]{lxds2}. For a proof of Part~(4), see \cite[Corollary~3.14]{Viana}.   
\end{proof}

We call the sets $ A_1,\ldots,A_k $ the \emph{ergodic components} of $ \psi_P $, and we call the sets $ A^1_i,\ldots,A^{n_i}_i$ the \emph{exact components} of $ A_i $.


\section{Main results}
\label{sec:ergodic stability}


\subsection{Condition~(*)}
The standing assumption of Theorem~\ref{th:main} is a condition on the orbits of the points in the singular set $ S_P $ of the slap map $ \psi_P $. 

\begin{defn}
A polygon $ P $ satisfies Condition~(*) 
if for every $ s \in S_P $, the forward orbits of 
\[ 
\psi_P(s^+):=\lim_{t \to s^+} \psi_{P}(t) \quad \text{and} \quad\psi_P(s^-):=\lim_{t \to s^-} \psi_{P}(t) 
\] 
do not contain elements of $ S_P $ or periodic points of $ \psi_P $. 
\end{defn}

Since $ S_P $ consists only of non-acute vertices of $ P $, and acute vertices of $ P $ are fixed points of $ \psi_P $, Condition~(*) can be equivalently formulated as follows: 
for 
every non-acute vertex $ s $ of $ P $, the forward orbits of $ \psi_{P}(s^+) $ and $ \psi_{P}(s^-) $ do not visit any vertex of $ P $ and do not contain any periodic point of $ \psi_P $. Examples of orbits that do not satisfy the first part and the second part of Condition~(*) are depicted in Figs~\ref{fig:orthogonal connection} and~\ref{fig:pre periodic}, respectively. 

\begin{remark}
In~\cite[Section~3]{lxds2}, we introduced a condition for general piecewise expanding maps of the interval called Condition~(*) as well. When specialized to slap maps, that condition becomes Condition~(*) above. We also remark that in~\cite{MDDGP13}, we introduced a condition called `no orthogonal vertex connections', which is exactly the first part of Condition~(*). 
\end{remark} 


\begin{figure}[h]
\begin{center}
\includegraphics[scale=0.4]{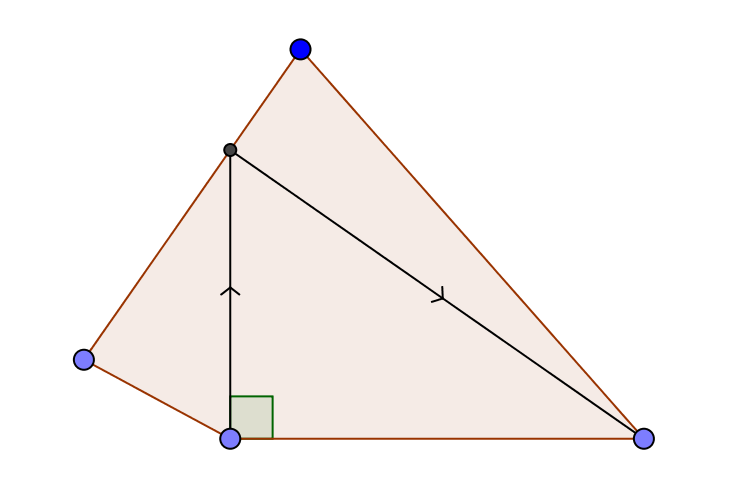}
\caption{Polygon that does not satisfy Condition (*): a non-acute vertex has a forward orbit ending at another vertex.}
\label{fig:orthogonal connection}
\end{center}
\end{figure}

\begin{figure}[h]
\begin{center}
\includegraphics[scale=0.4]{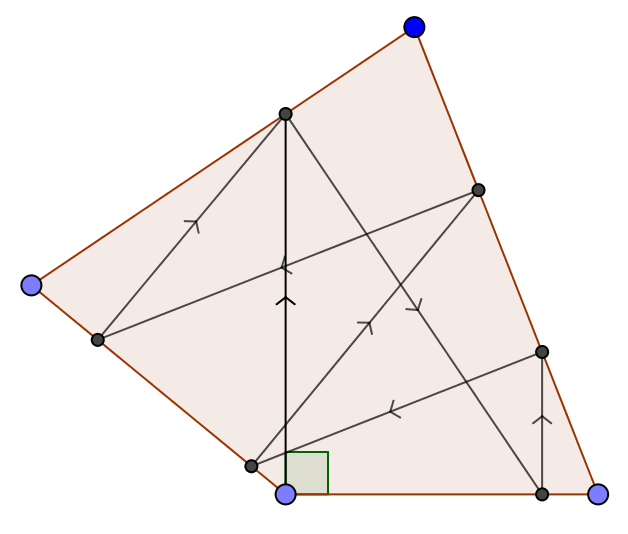}
\caption{Polygon that does not satisfy Condition (*): a non-acute vertex has a forward orbit that is eventually periodic.}
\label{fig:pre periodic}
\end{center}
\end{figure}


Deforming a polygon with $n$ sides by an orientation preserving similar transformation, i.e., dilations, translations and rotations, does not change its billiard dynamics. From this perspective, two polygonal billiards are equivalent if the corresponding polygons are similar to each other. Similarity is an equivalence relation in the set of all polygons with $n$ sides. The quotient of the set of convex polygons with $n$ sides by this equivalence relation is called the \textit{moduli space of convex polygons with $n$ sides}, which we denote by $ \PP_n $. Let $\Pp^1$ and $\Pp^2$ denote the real projective line and real projective plane, respectively. The proof of the following proposition can be found in \cite[Theorem 5.1]{MDDGP13}.

\begin{proposition}
The moduli space $ \PP_n $ is diffeomorphic to an open semialgebraic subset of $\Pp^1\times (\Pp^2)^{n-3}\times \Pp^1$, and it is a manifold of dimension $2n-4$.
\end{proposition}

It is clear that if a convex polygon $P$ satisfies Condition (*), then all convex polygons in the equivalence class $[P]\in\PP_n$ share the same condition. We denote by $ \PP^*_n $ the subset of convex polygons of $ \PP_n $ satisfying Condition (*). Hereafter, in order to simplify the presentation, we also call an element in $ \PP_n $ a convex polygon. As $ \PP_n $ is diffeomorphic to a product of real projective spaces, it carries a topology given by an induced metric $d$ and also an induced measure $m$. An adaptation of the proof of \cite[Proposition 5.3]{MDDGP13} gives the following result.

 \begin{theorem}\label{le:generic}
$\PP^*_n$ is a full measure residual subset of $\PP_n$.
\end{theorem}
\begin{proof}

Given a convex polygon $P$ with $n$ sides, denote by $e_1,\ldots, e_n$ its edges,  by $\ell_i$ the line
supporting $e_i$, and by  $\pi_{j,i}:\ell_i\to\ell_j$
the projection of $\ell_i$ onto $\ell_j$ along the normal direction of $\ell_i$. If $(a_i:b_i:c_i)$ denote the projective coordinates of the line $\ell_i$, then
\begin{equation}\label{orth:proj}
\pi_{j,i}(x,y)=\left(x-  \frac{ a_j x + b_j y + c_j }{a_ia_j+b_ib_j}\, a_i, \, 
y- \frac{ a_j x + b_j y + c_j }{a_ia_j+b_ib_j}\, b_i \,\right) \;.
\end{equation}
Of course, the projection is well defined whenever $\ell_i$ and $\ell_j$ are not perpendicular.
We say that $P$ has a {\em singular connection of order $k\geq 1$} if there is a sequence of edges  
$e_{i_0},e_{i_1},\ldots, e_{i_{k}}$ of $P$ and a sequence of points $p_0,p_1,\ldots, p_k$ of $P$, not all equal, such that:
\begin{enumerate}
\item[(a)] $p_0\in e_{i_0}$ is a vertex of $P$;
\item[(b)] $p_{j}= \pi_{i_{j},i_{j-1}}(p_{j-1})$, for every $j=1,\ldots, k$;
\item[(c)] the polygonal line
$p_0 p_1\ldots  p_{k}$ is contained in $P$;
\item[(d)] either $p_k$ is a vertex of $P$ or $p_k=p_j$ for some $j=1,\ldots,k-1$.
\end{enumerate}
In the previous condition, if $p_k$ is a vertex of $P$, then the singular connection is called \textit{vertex connection}. Otherwise, if $p_k=p_j$ for some $j=1,\ldots,k-1$, then the singular connection is called \textit{periodic connection}.
We denote by $\mathscr{S}_n^k\subseteq \PP_{n}$ the subset of polygons with singular connections of order $k$ and the subset $\mathscr{S}_n^\infty$ to be the union of all $\mathscr{S}_n^k$ over $k\geq1$. Clearly, $\PP_n\setminus \mathscr{S}_{n}^\infty \subseteq \PP_n^*$.

We claim that the set $\mathscr{S}_n^k$ is a finite union of codimension $1$ closed semialgebraic sets. From this property it is immediate that $\PP^*_n$ is a full measure residual subset of $\PP_n$, thus proving the theorem. By \cite[Proposition 5.3]{MDDGP13}, this property is known to hold when we restrict to vertex connections. In the following we show that the set of periodic connections of order $k$ is also a finite union of codimension $1$ closed semialgebraic sets. 

If the edges $e_{i_0},\ldots, e_{i_{k}}$ and the points $p_0,\ldots, p_{k}$ define a periodic connection, then $p_k=p_j$ for some $1\leq j\leq k-1$. Whence,
\begin{equation} \label{ovcp:eq}
\pi_{i_k,i_{k-1}}\circ\cdots\circ \pi_{i_1,i_0}(p_0)=\pi_{i_j,i_{j-1}}\circ\cdots\circ \pi_{i_1,i_0}(p_0),
\end{equation} 
where $p_0$ is a vertex of $P$.

Because \eqref{orth:proj} are rational functions
of the coordinates $(a_i:b_i:c_i)$ of the polygon's edges, equation 
 ~(\ref{ovcp:eq}) is also rational in these coordinates. Re\-du\-cing to a common denominator, and eliminating it, equation~(\ref{ovcp:eq}) becomes polynomial
in the projective coordinates of $\PP_n\subseteq \Pp^1\times(\Pp^{n-3})\times\Pp^1$.
Notice that ~(\ref{ovcp:eq}) is a system of two equations
which reduces to a single equation (non-identically zero) because the projection points  lie on a line.
Given an itinerary $\omega=\{i_0,i_1,\ldots,i_k\}$ with no repeated consecutive symbols, $1
\leq j\leq k-1$ and a vertex $p_0\in\ell_{i_0}$, the set $\Sigma_{\omega,j,p_0}$ of convex polygons defined by the equation~ (\ref{ovcp:eq}) is 
a codimension one closed 
subset of $\PP_n$.
Thus $\mathscr{S}_{n}^k$ is a finite union of such sets, as we wanted to show. 
\end{proof}

%
%

\begin{lemma}
\label{le:parallel}	
If $ P \in \PP^*_n $, then there exists $ \delta>0 $ such that if $ Q \in \PP_n $ and $ d(Q,P)<\delta $, then $ Q $ does not have vertices with internal 
right angle and parallel sides facing each other.
\end{lemma}

\begin{proof}
Condition~(*) implies that no segment contained in $ P $ with endpoints on two sides of $ P $ can be orthogonal to both of them. In particular, no vertex of $ P $ can have right internal angle, and $ P $ does not have parallel sides facing each others. The same is true for every polygon $ Q \in \PP_n $ sufficiently close to $ P $ in the metric $ d $. 
\end{proof}








\subsection{Main results}

Denote by $ \SRB(\psi_{P}) $ the set of all the ergodic acips of $ \psi_P $, and by $ \SRB(\Phi_{f,Q}) $ the set of all the ergodic SRB measures of $ \Phi_{f,Q} $. 
We can now give a precise formulation the main results of the paper.

\begin{theorem}
\label{thm:main2}
Given a polygon $P \in \PP^*_n$, there exists $\delta>0$ such that for every $f\in \RR^2$ with $\lambda(f)<\delta$ and for every $Q\in\PP_n$ with $ d(Q,P)<\delta $, there is a bijection $ \Theta_{f,Q} \colon\SRB(\psi_{P})\to \SRB(\Phi_{f,Q}) $. Moreover, 
\begin{enumerate}
\item the supports of the measures in $ \SRB(\Phi_{f,Q}) $ are pairwise disjoint,
\item the cardinality of $ \SRB(\Phi_{f,Q}) $ is less than or equal to $ n $, 
\item for every $ \nu \in \SRB(\psi_P) $, the number of Bernoulli components of $ \Theta_{f,Q}(\nu) $ equals the number of exact components of $ \nu $,
\item the union of the basins of the measures in $ \SRB(\Phi_{f,Q}) $ is a set of full measure-$ \Vol $ in $ M_Q $.
\end{enumerate}
\end{theorem}


Theorem~\ref{thm:main2} and Theorem~\ref{le:generic} yield immediately the following.

\begin{theorem}
\label{th:main3}
There exists a residual and full measure subset $\XX_n $ of $\PP_n$ with the following property: for every $P \in \XX_n$, there is $ \delta>0 $ such that if $ f \in \RR^2$ with $ \lambda(f)<\delta $ and $ Q \in \PP_n $ with $ d(Q,P)<\delta $, then the conclusion of Theorem~\ref{thm:main2} holds for $ \Phi_{f,Q} $. 
\end{theorem}

%
%

\subsection{Outline of the proof of Theorem~\ref{thm:main2}}
To guide the reader through the proof, we now provide an outline of the proof of Theorem~\ref{thm:main2}. The first part consists of several preliminary results presented in Section~\ref{se:preliminary}. The final part of the proof is given in Section~\ref{se:bijection theta}. Section~\ref{se:preliminary} is divided into five subsections. 


\subsubsection{Trapping regions}
The starting point of the proof of Theorem~\ref{thm:main2} is the construction of a strictly forward-invariant set (trapping region) common to all maps $ \psi_Q $ with $ Q $ sufficiently close to a polygon $ P $ satisfying Condition~(*). Indeed, this condition implies that the slap map $ \psi_P $ is piecewise expanding, and permits to construct for every ergodic acip $ \nu $ of $ \psi_P $, a subset $ U(\nu) $ that contains the support of $ \nu $ and that is a trapping region for every slap map $ \psi_Q $ with $ Q $ sufficiently close to $ P $ (see Proposition~\ref{pr:boundaryacip}).

When the reflection law $ f $ is sufficiently close to the constant function $ 0 $, we can think of $ \Phi_{f,P} $ as a small perturbation of $ \Phi_{0,P} $. Since $ \Phi_{0,P}(s,0) = (\psi_P(s),0) $ for $ s \in S^1 $, by embedding $ S^1 $ in $ S^{1} \times (-\pi/2,\pi/2) $, we can then think of the 2-dimensional map $ \Phi_{f,P} $ as a small perturbation of the 1-dimensional map $ \psi_P $, provided that $ f $ is sufficiently close to the function 0. This allows us to show that the subset $ W(\nu) := U(\nu) \times (-\lambda(f) \pi/2,\lambda(f) \pi/2) $ is a trapping region for all maps $ \Phi_{f,Q} $ `sufficiently close to $ \psi_P $ in the sense specified above, i.e., when $ f $ and $ Q $ are sufficiently close to the function 0 and the polygon $ P $, respectively (see Proposition~\ref{pr:trapping3}). 

\subsubsection{Hyperbolicity} Next, we use again the hypothesis that $ P $ satisfies Condition~(*) to show that whenever $ \Phi_{f,Q} $ is sufficiently close $ \psi_P $, the expansion along the horizontal direction (the unstable direction) generated by $ d\Phi_{f,Q} $ is uniform in $ f$ and $ Q $ (see Lemma~\ref{le:expansion}). This is a rather obvious consequence of the closeness of $ \Phi_{f,Q} $ and $ \psi_P $. A less obvious conclusion is that there exists $ \eta>0 $ such that for every $ \Phi_{f,Q} $ sufficiently close $ \psi_P $ and for every horizontal segment $ \Gamma $, 
there exist a subsegment $ \gamma \subset \Gamma $ and $ k \in \Nn $ with the property that $ \Phi^k_{f,Q}(\gamma) $ is a horizontal segment of length $ \eta $ (see Lemma~\ref{le:growth}). Results of this type are sometimes called \emph{growth lemmas}, and are consequences of the fact that the local expansion of a given hyperbolic map prevails over the local complexity generated by the singularities of the map. Proposition~\ref{pr:analytic} provides the essential information for establishing such a relation between the local expansion and the local complexity for the billiard maps $ \Phi_{f,Q} $. 

\subsubsection{Existence of SRB measures} 
The following step is to establish the existence of SRB measures and their basic ergodic properties for maps $ \Phi_{f,Q} $ sufficiently close to $ \psi_P $. We do that by applying to $ \Phi_{f,Q} $ a theorem of Pesin on the existence of SRB measures for a general class of hyperbolic maps with singularities (see Theorem~\ref{th:SRB}). In this theorem, the SRB measures are obtained as limit points of averages of the push forwards of an absolutely continuous measure supported on a local unstable manifold. We prove that the hypotheses of Pesin's theorem are satisfied using our results on the local expansion and local complexity for the maps $ \Phi_{f,Q} $ (see Proposition~\ref{pr:h}).


\subsubsection{Periodic points} A central role in the proof Theorem~\ref{thm:main2} is played by  sets of periodic orbits of $ \psi_P $ with special recurrent properties (see Lemma~\ref{le:uno}). These orbits can be continued to periodic orbits of maps $ \Phi_{f,Q} $ sufficiently close to $ \psi_P $ (see Theorem~\ref{th:fp}). In the process, the properties of the orbits of $ \psi_P $ are passed to their continuations (see Lemma~\ref{le:unomezzo}), permitting to transfer the ergodic properties of the acips of $ \psi_P $ to the SRB measures of $ \Phi_{f,Q} $.

\subsubsection{A criterion for ergodicity} Another important ingredient of the proof of Theorem~\ref{thm:main2} is a criterion for the ergodicity of an SRB measure using the periodic points of $ \Phi_{f,Q} $ (see Theorem~\ref{th:erg-periodic}). Roughly speaking, the criterion says that the set of all points with a local unstable manifold that has a forward iterate intersecting the stable manifold of a periodic point $ x_0 $ has full measure with respect to some ergodic SRB of $ \Phi_{f,Q} $. We also prove a similar criterion that allows us to estimate the number of Bernoulli components of an ergodic SRB (see Proposition~\ref{pr:period}). Both criteria can be easily extended to general hyperbolic maps with singularities. 

\subsubsection{Final part of the proof} 
In the final part of the proof, we first construct a bijection between the set of the ergodic SRB measures of $ \Phi_{f,Q} $ and the set of the ergodic acips of $ \psi_P $, and then prove that the number of Bernoulli components of an ergodic SRB measure of $ \Phi_{f,Q} $ equals the number of exact components of the corresponding acip of $ \psi_P $. This is achieved using the results previously obtained in Section~\ref{se:preliminary}, and considering sets of of periodic points of $ \psi_P $ and $ \Phi_{f,Q} $ with special recurrent properties. The construction of the bijection consists essentially of two steps. In the first step, we demonstrate that for every ergodic acip $ \nu $ of $ \psi_P $, the corresponding trapping region $ W(\nu) $ contains up to set of zero measure the support of a single ergodic SRB measure of $ \Phi_{f,Q} $. The second step is devoted to the proof that the support of every ergodic SRB measure of $ \Phi_{f,Q} $ is contained up to set of zero measure in a single trapping region $ W(\nu) $ corresponding to some ergodic acip $ \nu $ of $ \psi_P $ (see Lemmas~\ref{le:tre}-\ref{le:onto}). In the very last part of the proof, we use a result obtained in a previous work to prove that the union of the basins of the ergodic SRB measures of $ \Phi_{f,Q} $ is a set of full $ \Vol $-measure.

\section{Preliminary results}
\label{se:preliminary}

This section contains the preliminaries results needed to prove Theorem~\ref{thm:main2}.

\subsection{Trapping regions}

Let $ P \in \PP_n $. By Theorem~\ref{th:slap acip}, the support of an ergodic acip $ \nu $ of $ \psi_P $ consists of finitely many pairwise disjoint closed intervals. In~\cite{lxds2}, we obtained a characterization of the boundary points of $ \supp \nu $. When $ P $ satisfies Condition~(*),  such a characterization can be formulated as follows. 

\begin{proposition}
\label{pr:boundaryacip}
Suppose that $ P \in \PP^*_n $, and let $ \nu $ be an ergodic acip of $ \psi_P $. If $ s \in \partial \supp \nu $, then there exist an orbit segment $ \{s_0,\ldots,s_k\} $, $ k \ge 2 $  of $ \psi_P $ and $ 0 < j < k $ such that
\begin{enumerate}
\item $ s_0 \in S_P \cap \operatorname{int}(\supp \nu) $,
\item either $ s_i = \psi^i(s^+_0) $ for every $ 1 \le i \le k $, or $ s_i = \psi^i(s^-_0) $ for every $ 1 \le i \le k $,
\item $ s_i \in \partial \supp \nu $ for every $ 0 < i < k $, 
\item $ s_k \in \operatorname{int}(\supp \nu) $,
\item $ s = s_j $.
\end{enumerate}
\end{proposition}

We call $ \{s_0,\ldots,s_k\} $ a \emph{boundary segment of $ \supp \nu $}.

\begin{remark}
\label{re:separated}
It is not difficult to see that Proposition~\ref{pr:boundaryacip} and Condition~(*) imply that $ \supp \nu_1 $ and $ \supp \nu_2 $ are disjoint for any pair $ \nu_1,\nu_2 $ of distinct ergodic acips of $ \psi_P $.
\end{remark}

In the next proposition, for a given $ P \in \PP^*_n $ and a given ergodic acip $ \nu $ of $ \psi_P $, we construct a trapping region arbitrarily close to $ \supp \nu $ and common to all slap maps $ \psi_Q $ with $ Q $ sufficiently close to $ P $. A similar conclusion was obtained in~\cite[Lemma~4.3]{lxds2} for more general piecewise expanding maps.

Given $ \zeta>0 $, we denote by $ (\supp \nu)_\zeta $ the $ \zeta $-neighborhood of $ \supp \nu $. 

\begin{proposition}
\label{pr:trapping1}
Suppose that $ P \in \PP^*_n $, and let $ \nu_1,\ldots,\nu_m $ be the ergodic acips of the slap map $ \psi_P $. For every $ \zeta>0 $, there exist $ \delta>0 $, $ \tau>0 $ and pairwise disjoint closed sets $ U_1,\ldots,U_m $ of $ S^1 $ such that if $ d(Q,P)<\delta $, then for every $ 1 \le i \le m $, 
\begin{enumerate}
\item $ \supp \nu_i \subset \operatorname{int}(U_i) \subset (\supp \nu_i)_\zeta $, 
\item $ U_i $ is a union of finitely many pairwise disjoint closed intervals whose endpoints are not vertices of $ P $,
\item $ \psi_Q(U_i) \subset \operatorname{int}(U_i) $ and 
$ d_{S^1}(\psi_Q(U_i),\partial U_i) > \tau $.
\end{enumerate}
\end{proposition}

\begin{proof}
We construct the sets $ U_1,\ldots,U_m $ inductively. 

We start with the set $ U_1 $. Its construction is also inductive, and requires $ l $ steps, i.e., as many steps as the number of distinct boundary segments $ \gamma_1,\ldots,\gamma_l $ of $ \supp{\nu_1} $. At the $ k $th step, we enlarge $ \supp{\nu}_1 $ by enlarging the intervals forming $ \supp{\nu}_1 $ whose endpoints lie on the $ k $th boundary segment.

Set $ A_0 = \supp \nu_1 $. Suppose that $ A_{k-1} $ with $ 1 \le k \le l $ is given, and let $ \gamma_k = \{s_0,\ldots,s_{N_k}\} $ be the $ k $th boundary segment of $ \nu_1 $. Once again, we construct $ A_k $ inductively. Set $ B_0=A_{k-1} $. Suppose that $ B_{i-1} $ with $ 1 \le i < N_{k} $ is given. If $ s_{N_k - i} \in \operatorname{int}(B_{i-1}) $ -- which happens when there exists $ k'<k $ such that $ \gamma_{k'} $ and $ \gamma_k $ shares the same point $ s_{N_k - i + 1} $ -- then we set $ B_{i} = B_{i-1} $ . Otherwise, if $ s_{N_k - i} $ is the right endpoint of an interval of $ B_{i-1} $, then choose $ t_{N_k - i} \in (s_{N_k - i},s_{N_k - i}+\zeta) $ satisfying
\begin{enumerate}[(i)]
\item $ [s_{N_k - i},t_{N_k -i}] \cap S_P = \emptyset $,
\item $ [s_{N_k - i},t_{N_k -i}] \cap B_{i-1} = \emptyset $,
\item $ [s_{N_k - i},t_{N_k -i}] \cap (\supp \nu_2 \cup \cdots \cup \supp \nu_m) = \emptyset $,
\item $ \psi_P([s_{N_k - i},t_{N_k -i}]) \subset \operatorname{int}(B_{i-1}) $.
\end{enumerate}
Instead, if $ s_{N_k - i} $ is the left endpoint of an interval of $ B_{i-1} $, then choose $ t_{N_k - i} \in (s_{N_k - i}-\zeta,s_{N_k - i}) $ satisfying conditions analogous to (i)-(iv). Such a $ t_{N_k -i} $ exists, because $ s_{N_k - i} \notin S_P $, $ \psi_P(s_{N_k - i}) \in \operatorname{int}(B_{i-1}) $, and the supports of the acips of $ \psi_P $ are pairwise disjoint. Next, define $ B_i = B_{i-1} \cup [s_{N_k - i},t_{N_k -i}] $. The previous procedure gives the sets $ B_0,\ldots,B_{N_k-1} $. finally, define $ A_k = B_{N_k - 1} $. Once all the sets $ A_0,\ldots,A_l $ have been computed, define $ U_1 = A_l $. 

An almost identical construction produces the sets $ U_2,\ldots,U_m $. 
Assume that $ U_{j-1} $ with $ 2 \le j \le m $ is given, and  construct $ U_j $ by following the same procedure used for $ U_1 $ with the obvious modifications and with Condition~(iii) above replaced by 
\[
[s_i,t_i] \cap \left(U_1 \cup \cdots \cup U_{j-1} \cup \supp \nu_{j+1} \cup \cdots \cup \supp \nu_{m} \right) = \emptyset.
\]
It is easy to see that the sets $ U_1,\ldots,U_m $ obtained this way have the wanted properties for the map $ \psi_P $. In fact, besides $ \psi_P(U_i) \subset \operatorname{int}(U_i) $ for every $ 1 \le i \le m $, a bit more can be derived from the construction  above. Namely, we obtain that there is $ \tau>0 $ such that 
\begin{equation}
\label{eq:contained}	
d_{S^1}(\psi_P(U_i), \partial U_i) > 2\tau \qquad \text{for every } 1 \le i \le m.
\end{equation}

Next, we want to extend the previous conclusion to every map $ \psi_Q $ with $ d(Q,P) $ sufficiently small. To this end, note that if $ d(Q,P) $ is sufficiently small, then there is a natural bijective correspondence between the vertices of $ Q $ and $ P $. So for $ d(Q,P) $ sufficiently small case, denote by $ s_Q $ the vertex of $ Q $ corresponding to the vertex $ s $ of $ P $. Then, for every vertex $ s $ of $ P $,
\[
s_Q \to s \quad \text{as } d(Q,P) \to 0. 
\]
Moreover by Condition~(*) and its consequence that no vertex of $ P $ can have an internal angle equal to $ \pi/2 $ (see Lemma~\ref{le:parallel}), it follows that for every vertex $ s $ of $ P $,
\begin{equation}
\label{eq:convergence}	
\psi_{Q}(s^{\pm}_Q) \longrightarrow \psi_{P}(s^{\pm}) \quad \text{as } d(Q,P) \to 0. 
\end{equation} 

Finally, by construction of $ U_i $ and properties~\eqref{eq:contained} and~\eqref{eq:convergence}, it is not difficult to see that $ \psi_Q(U_i) \subset \operatorname{int}(U_i) $ and $ d_{S^1}(\psi_Q(U_i), \partial U_i) > \tau $ for every $ 1 \le i \le m $ provided that $ d(Q,P) $ is sufficiently small. 
\end{proof}


Recall that $ \Phi_{f,Q} \colon K_{f,Q} \setminus N_Q \to K_{f,Q} $. Next, we show that if a polygon $ Q $ is sufficiently close to $ P \in \PP^*_n $, and $ f $ is a reflection law sufficiently close to 0, then the map $ \Phi_{f,Q} $ has a trapping region close to $ \bigcup^m_{i=1} U_{i} \times \{0\} $ with $ U_1,\ldots,U_m $ being as in Proposition\ref{pr:trapping1}.

\begin{proposition}
\label{pr:trapping3}	
Suppose that $ P \in \PP^*_n $. Given $ \zeta>0 $, let $ \delta>0 $, $ \tau>0 $ and the sets $ U_1,\ldots,U_m $ be as in Proposition~\ref{pr:trapping1}. There exist $ 0 < \delta' < \delta $ and  pairwise disjoint sets $ W_1,\ldots,W_m $ of $ M $ defined by
\[
W_i = U_i \times \left(-\frac{\pi}{2} \lambda(f),\frac{\pi}{2} \lambda(f)\right), \qquad i=1,\ldots,m 
\]
such that if $ \lambda(f)<\delta' $ and $ d(Q,P)<\delta' $, then $ \Phi_{f,Q}(W_i \setminus N_Q) \subset \operatorname{int}(W_i) $ for every $ 1 \le i \le m $.
\end{proposition}

\begin{proof}
Given $ \zeta>0 $, let $ \delta>0 $, $ \tau>0 $ and the sets $ U_1,\ldots,U_m $ be as in Proposition~\ref{pr:trapping1}. Define $ W_i $ as in the statement of the proposition. 
Clearly, the sets $ W_1,\ldots,W_m $ are disjoint because so are $ U_1,\ldots,U_m $, and satisfy Condition~(1).  

Next, recall that $ \phi_{f,Q}=(g_Q,f\circ h_Q) $.
Since $ |f \circ h_Q| < \pi \lambda(f)/2 $, to prove Condition~(2), all we need to show is that there exists $ 0<\delta'<\delta $ such that if $ \lambda(f)<\delta' $ and $ d(Q,P)<\delta' $, then $ g_Q(W_i \setminus N_Q) \subset \operatorname{int}(U_i) $.   
By Part~(3) of Proposition~\ref{pr:trapping1}, that is an immediate consequence of
\[ 
|g_Q(s,\theta)-\psi_Q(s)| \le \tau, \qquad (s,\theta) \in W_i \setminus N_Q.
\]

Let $ (s,\theta) \in W_i \setminus N_Q $. Since $ \psi_Q(s) = g_Q(s,0) $, using the Mean Value Theorem, we obtain
\begin{align*}
|g_Q(s,\theta)-\psi_Q(s)| & = |g_Q(s,\theta)-g_Q(s,\theta)| \\
& \le \sup_{-\pi \lambda(f)/2 < \theta < \pi \lambda(f)/2} \left|\partial_{\theta} g_Q(s,\theta)\right| |\theta|.
\end{align*}
By~\eqref{eq:differential}, $ \partial_{\theta} g_Q(s,\theta) = -t_Q(s,\theta)/\cos (h_Q(s,\theta)) $. The function $ t_{Q} $ is bounded by $ 1 $, because the perimeter of $ Q $ is equal to 1. 
By Lemma~\ref{le:parallel}, if $ \lambda(f) $ and $ d(Q,P) $ are sufficiently small, then $ |h_Q(\cdot,0)| $ is uniformly bounded away from $ \pi/2 $ on $ K_{f,Q} \setminus N_Q $, i.e., there is a constant $ A>0 $ depending only on $ P $ such that $ \cos \circ h_Q > A $ on $ K_{f,Q} \setminus N_Q $. Therefore,
\[
\left|g_Q(s,\theta)-\psi_Q(s)\right| < \frac{|\theta|}{A}<\frac{\pi}{2A} \lambda(f), \qquad (s,\theta) \in W_i \setminus N_Q,
\]
provided that $ \lambda(f) $ and $ d(Q,P) $ are sufficiently small. By further taking a smaller $ \lambda(f) $, we obtain $ \pi/(2A) \lambda(f) \le \tau $. 

By the previous conclusion, we can find $ 0 < \delta' < \delta $ such that if $ \lambda(f) < \delta' $ and $ d(Q,P)<\delta' $, then \[ 
\pi_s \circ \Phi_{f,Q}(W_i \setminus N_Q) = g_Q(W_i \setminus N_Q) \subset \operatorname{int}(U_i).
\] 
\end{proof}


\subsection{Hyperbolicity}
\label{su:hyp}

\begin{defn}
Given $ Q \in \PP_n $, $ f \in \mathcal{R} $ and $ m \in \Nn $, denote by $ \alpha(\Phi^m_{f,Q}) $ and $ \beta(\Phi^m_{f,Q})$ the infimum and the supremum of $ \|d_x \Phi^m_{f,Q} (1,0)^T \| $, respectively,  
over the subset of $ K_{f,Q} $ where $ \Phi^m_{f,Q} $ is differentiable.
\end{defn}

The next lemma says that the horizontal direction is uniformly expanding for $ \Phi_{f,Q} $ whenever $ P \in \PP^*_n $, $ f \in \mathcal{R} $ and $ Q \in \PP_n $ with $ \lambda(f) $ and $ d(Q,P) $ sufficiently small. We emphasize that $ f $ and $ \Phi_{f,P} $ may not be invertible for $ f \in \mathcal{R} $.

\begin{lemma}
\label{le:expansion}
Let $ P \in \PP^*_n $. Then there exist $ \delta>0 $ and $ 1<\alpha_0<\beta_0 $ such that 
\begin{enumerate}
\item if $ f \in \mathcal{R}^1 $ and $ d(Q,P)<\delta $, then $ \Lambda_{f,Q} $ is hyperbolic;
\item if $ f \in \mathcal{R} $ with $ \lambda(f)<\delta $ and $ d(Q,P)<\delta $, then 
\[
 \alpha_0 \le \alpha(\Phi_{f,Q}) \le \beta(\Phi_{f,Q}) \le \beta_0. 
 \]	
\end{enumerate}
\end{lemma}

\begin{proof}
Part~(1). By Lemma~\ref{le:parallel}, every $ Q $ sufficiently close to $ P $ has no parallel sides facing each other. For such a $ Q $, Proposition~\ref{pr:hyperbolicity} guarantees that the attractor $ \Lambda_{f,Q} $ is hyperbolic for every $ f \in \mathcal{R}^1 $.

Part~(2). Let $ f \in \mathcal{R} $, and let $ Q \in \PP_n $. By~\eqref{eq:differential}, we have 
\[
\alpha_{f,Q}(x) := \left\|d_x \Phi_{f,Q} 
\begin{pmatrix} 0 \\ 1 \end{pmatrix}\right\| = \frac{\cos \theta}{\cos(h_Q(s,\theta))}
\]
for every $ x = (s,\theta) \in K_{f,Q} \setminus N^+_{f,Q} $. Denote by $ L(x) $ the line containing the side of $ Q $ where $ s $ lies. Also, denote by $ 0<\omega_{Q}(x)<2\pi $ be the external angle between the lines $ L_Q(x) $ and $ L_Q(\Phi_{f,Q}(x)) $, i.e., the angle of a counterclockwise rotation centered at the point of intersection $ L_Q(x) \cap L_Q(\Phi_{f,Q}(x)) $ mapping $ L_Q(x) $ to $ L_Q(\Phi_{f,Q}(x)) $. A simple computation shows that $ h_Q(s,\theta)=\pi-\omega_{q}(x)-\theta $, 
and so
\[
\alpha_{f,Q}(x) = \frac{\cos \theta}{\cos(\pi-\omega_{Q}(x)-\theta)} = \frac{1}{-\cos \omega_{Q}(x) + \tan \theta \sin \omega_{Q}(x)}.
\]

Now, let $ Q = P \in \PP^*_n $. Since $ P $ satisfies Condition~(*), $ P $ does not have sides parallel facing each other and adjacent sides that are perpendicular (see Lemma~\ref{le:parallel}). It is still possible for $ P $ to have parallel sides, but two consecutive collisions $ x $ and $ \Phi_{f,P}(x) $ at parallel sides never occur. It follows that if $ \lambda(f) $ is sufficiently small, then the angle $ \omega_P(x) $ must satisfy the following property: there exist $ \tau>0 $ depending only on $ P $ and $ \delta_0>0 $ such that if $ \lambda(f)<\delta_0 $, then 
\begin{equation}
\label{eq:omega}	
\omega_{P}(x) \in \left(\frac{\pi}{2}+\tau,\pi-\tau\right) \cup \left(\pi+\tau,\frac{3}{2}\pi - \tau\right) \quad \text{for all } x \in K_{f,P} \setminus N^+_{f,P}.
\end{equation} 
Recall that $ x \in K_{f,P} $ means that $ |\theta|<\pi \lambda(f)/2 $ for $ f \neq 0 $, and $ \theta=0 $ for $ f=0 $. From this, propriety~\eqref{eq:omega} and the expression of $ \alpha_{f,P}(x) $, it follows that there exists $ 0<\delta_1 \le \delta_0 $ and $ 1<\alpha_0<\beta_0 $ such that if $ \lambda(f) < \delta_1 $, then 
\begin{equation}
\label{eq:alpha}	
\alpha_0< \alpha_{f,P}(x) < \beta_0 \qquad \text{for all } x \in K_{f,P} \setminus N^+_{f,P}.
\end{equation}

By~Lemma~\ref{le:parallel}, every $ Q $ sufficiently close to $ P $ does not have parallel sides facing each other and adjacent sides that are perpendicular. From this, it is not difficult to see that there must exist $ 0<\delta \le \delta_1 $ such that~\eqref{eq:omega} with the same $ \tau $, and therefore~\eqref{eq:alpha} with the same $ \alpha_0 $ and $ \beta_0 $ continue to hold for every $ Q \in \PP_n $ with $ d(Q,P)<\delta $ and every $ f \in \mathcal{R} $ with $ \lambda(f)<\delta $. This implies the wanted conclusion.
\end{proof}

%

Recall that $ V_Q = \{s_0,\ldots,s_{n-1}\} $ is the set of the vertices of $ Q \in \PP_n $. For every $ s \in V_Q $ and every $ r>0 $, define
\[
I(s,r) = (s-r,s) \cup (s,s+r) \subset S^1,
\]
and
\[
H_Q(r) = \bigcup_{s \in V_Q} I(s,r) \times (-r,r) \subset M.
\] 

The first conclusion of the next proposition is an obvious consequence of Lemma~\ref{le:expansion}. The second conclusion says,  roughly speaking, that for every $ m \in \Nn $, the map $ \Phi^m_{f,Q} $ is differentiable on a sufficiently small neighborhood of the `vertices' of the polygon provided that $ P $ satisfies~(*) and $ \Phi_{f,Q} $ is sufficiently close to $ \Phi_{0,P} $. In turn, this fact implies that when a sufficiently short curve $ \gamma $ in $ K_{f,Q} $ is iterated forward $ m $ times, it cannot be cut more than once by the singular set of $ \Phi_{f,Q} $.

\begin{proposition}
\label{pr:analytic}
Let $ P \in \PP^*_n $. For every $ \bar{\alpha} > 1 $, there exist $ \delta>0 $, $ m \in \Nn $ and $ r>0 $ such that if $ f \in \mathcal{R} $ with $ \lambda(f)<\delta $ and $ d(Q,P)<\delta $, then 
\begin{enumerate}
\item $ \bar{\alpha} \le \alpha(\Phi^m_{f,Q}) \le \beta(\Phi^m_{f,Q}) \le \beta^m_0 $ with $ \beta_0 $ be as in Lemma~\ref{le:expansion},
\item $ \Phi^m_{f,Q}|_{H_Q(r)} $ is differentiable.
\end{enumerate}
\end{proposition}

\begin{proof}
Let $ P \in \PP^*_n $, and let $ \bar{\alpha}>1 $. By Lemma~\ref{le:expansion}, there exist $ m \in \Nn $ and $ \delta_0>0 $ such that 
\[ 
\bar{\alpha} \le \alpha^m_0 \le \alpha(\Phi^m_{f,Q}) \le \beta(\Phi^m_{f,Q}) \le \beta^m_0
\] 
for all $ f \in \RR $ with $ \lambda(f)<\delta_0 $ and all $ Q \in \PP_n $ with $ d(Q,P)<\delta_0 $. The value of $ m $ will be kept fixed throughout the rest of the proof.

Recall that $ Y_P = V_P \times (-\pi/2,\pi/2) $. Since $ P $ satisfies Condition~(*), it follows that if $ s \in S_P $ (i.e., $ s $ is a singular point of $ \psi_P $), then the forward orbits of $ \psi_{P}(s^+) $ and $ \psi_{P}(s^-) $ do not visit any vertex of $ P $. Also, recall that each vertex of $ P $ in $ V_P \setminus S_P $ is a fixed point of $ \psi_P $. 
Hence, there exists $ r_0>0 $ such that 
$d_{S^1}(V_P,\psi^i_P(I(s,r_0)))>0 $ for every $ 0 \le i \le m $ and every $ s \in V_P $. Equivalently, in terms of the map $ \Phi_{0,P} $, 
\[ 
d_{M}(Y_P,\Phi^i_{0,P}(I(s,r_0) \times \{0\}))>0 
\]
for all $ 0 \le i \le m $ and all $ s \in V_P $.
It is not difficult to see that the conclusion remains valid for every $ Q $ sufficiently close to $ P $. More precisely, there is $ 0 < \delta_1 \le \delta_0 $ such that if $ d(Q,P)<\delta_1 $, then 
\begin{equation}
\label{eq:d}	
d_{M}(Y_Q,\Phi^i_{0,Q}(I(s,r_0) \times \{0\}))>0 \qquad \forall 0 \le i \le m \;\; \forall s \in V_Q.
\end{equation}

Now, arguing as in the proof of Proposition~\ref{pr:trapping3},
one can show that there is $ 0 < r \le r_0 $ such that~\eqref{eq:d} holds even when $ I(s,r_0) \times \{0\} $ is replaced by $ I(s,r) \times \{-r,r\} $, and  
that there is $ 0 < \delta \le \delta_1 $ such that if $ f \in \RR $ with $ \lambda(f)<\delta $ and $ d(Q,P)<\delta $, then 
\[ 
d_{M}(Y_Q,\Phi^i_{f,Q}(I(s,r) \times (-r,r)))>0 
\] 
for all $ 0 \le i \le m $ and all $ s \in V_Q $. By taking $ \delta $ sufficiently small, one can even guarantee that 
\[ 
\Phi^{i}_{f,Q}(I(s,r) \times (-r,r)) \subset S^1 \times (-r,r) 
\] 
for all $ 0 \le i \le m $ and all $ s \in V_Q $. What we have just proved can be reformulated in terms of the set $ H_Q(r) $ as follows. There exist $ r>0 $ and $ \delta>0 $ such that if $ f \in \RR $ with $ \lambda(f)<\delta $ and $ d(Q,P)<\delta $, then 
\[	
d_{M}(Y_Q,\Phi^i_{f,Q}(H_Q(r)))>0 \quad \text{and} \quad \Phi^{i}_{f,Q}(H_Q(r)) \subset S^1 \times (-r,r) 
\]
for every $ 0 \le i \le m $. The first inequality implies that $ \Phi^{m}_{f,Q} $ is differentiable on $ H_{Q}(r) $. Indeed, suppose that the claim was not true. Then there would exist $ 0 \le i < m $ such that $ \Phi^i_{f,Q}(H_{Q}(r)) \cap N^+_{f,Q} \neq \emptyset $ implying $ d_{M}(Y_Q,\Phi^i_{f,Q}(H_{Q}(r)))=0 $, which is impossible.
\end{proof}


The last lemma of this section is a corollary of Proposition~\ref{pr:analytic}. It illustrates yet another consequence of Condition~(*): if $ P \in \PP^*_n $, then every horizontal segment contains a subsegment that reaches a uniform length after $ k $ iterations of every $ \Phi_{f,Q} $ sufficiently close to $ \Phi_{0,P} $ with $ k $ being independent of $ f $ and $ Q $.

\begin{lemma}
\label{le:growth}
Let $P \in \PP^*_n$. Then there exist $ \delta>0 $ and $ \eta>0 $ such that if $f\in\mathcal{R}$, $ Q \in \PP_n $ with $\lambda(f)<\delta$ and $ d(Q,P)<\delta $, and $\Gamma$ is a horizontal segment, then there exist a segment $\gamma\subset\Gamma$ and $k \in \Nn $ with the property that $ \Phi_{f,Q}^k(\gamma) $ is a horizontal segment with $ |\Phi_{f,Q}^k(\gamma)| > \eta $.
\end{lemma}

\begin{proof}

Choose\footnote{This choice is arbitrary, every $ \bar{\alpha}>2 $ will do.} $ \bar{\alpha} = 3 $, and let $ m>0 $, $ r>0 $ and $ \delta>0 $ be as in Proposition~\ref{pr:analytic}. Let $ f \in \RR $ with $ \lambda(f)<\delta $, and let $ Q \in \PP_n $ with $ d(Q,P)<\delta $.

Let $ \Gamma $ be a horizontal segment in $ K_{f,Q} $. Define recursively a sequence of horizontal segments $ \{\Gamma_j\} $ as follows: let $ \Gamma_0 = \Gamma $, and   
let $ \Gamma_{j+1} $ be any interval of maximal length of $ \Phi^m_{f,Q}(\Gamma_j) $ for every $ j \ge 0 $. We claim that $ |\Gamma_j| > r $ for some $ j \ge 0 $. It is easily seen that the conclusion of the lemma with $ \eta=r $ and $ k=mj $ is a direct consequence of our claim. 

To prove the claim, we study separately the two alternatives: 1) $ \Phi^m_{f,Q}(\Gamma_j) $ consists of more than two horizontal segments for some $ j \ge 0 $, and 2) $ \Phi^m_{f,Q}(\Gamma_j) $ consists of one or two horizontal segments for every $ j $. If alternative 1 occurs, 
then there exist $ 1 \le i \le m $ and a segment $ \Gamma' \subset \Phi^{i}_{f,Q}(\Gamma_j) $ with both endpoints on $ Y_Q $. By Proposition~\ref{pr:analytic}, 
\[
\Gamma' \subset \Phi^{i}_{f,Q}(H_Q(r)) \subset S^1 \times (-r,r),
\] 
and so the intersection $ \Gamma' \cap H_{Q}(r) $ contains a segment $ \Gamma'' $ of length $ r $. Since $ \Phi^{m-i}_{f,Q} $ is continuos on $ H_{Q}(r) $ and $ \alpha(\Phi_{f,Q})>1 $,  
\[ 
|\Gamma_{j+1}| \ge |\Phi^{m-i}_{f,Q}(\Gamma'')| > r.
\] 
If alternative 2 occurs, then we clearly have
$ |\Gamma_{j+1}| \ge  |\Gamma_{j}| \bar{\alpha}/2 $, and so 
\[ 
|\Gamma_j| \ge |\Gamma_0| \left(\frac{\bar{\alpha}}{2}\right)^j > r 
\]
for some $ j \ge 0 $. Hence, in both cases, there exists $ j \ge 0 $ such that $ |\Gamma_j| > r $. This completes the proof.
\end{proof}

\subsection{Existence of SRB measures}
\label{su:srb}


Recall that $ \Vol $ denotes the volume of generated by the Riemannian metric $ d_M $ on $ M $. Given a $ C^1 $-curve $ \Gamma \subset K_{f,Q} $, denote by $ |\Gamma| $ and $ \Vol_{\Gamma} $ the length of $ \Gamma $ and the normalized volume of $ \Gamma $ induced by the metric $ d_{M} $, respectively. 
Finally, denote by $ N^+_{f,Q}(r) $ the neighborhood of $ N^+_{f,Q} $ in $ K_{f,Q} $ of radius $ r>0 $. 

\begin{proposition}
\label{pr:h}
If $ P \in \PP^*_n $, then there exists $ \delta>0 $ such that if $ f \in \RR^2 $ with $ \lambda(f) < \delta $ and $ d(Q,P)<\delta $, then the following properties hold:
\begin{enumerate}[(A)]
\item there are positive constants $ C=C(f,Q) $ and $ r_0=r_0(f,Q) $ such that
\[ 
\Vol(\Phi^{-n}_{f,Q}(N^+_{f,Q}(r))) < Cr \qquad \forall n \ge 1 \quad \forall 0<r<r_0,
\] 
\item there is $ r_1=r_1(f,Q)>0 $ such that for every horizontal segment $ \Gamma $, there exists 
$ B=B(f,Q,\Gamma)>0 $ for which 
\[ 
\Vol_{\Gamma}(\Gamma \cap \Phi^{-n}_{f,Q}(N^+_{f,Q}(r))) < B r \qquad \forall n \ge 1 \quad \forall 0<r<r_1.
\] 
\end{enumerate}
\end{proposition}

\begin{proof}
In~\cite[Lemma~4.9 and Theorem~4.15]{lxds2}, we demonstrated that properties~(A) and~(B) are consequences of the \emph{$ m $-step expansion condition} (c.f.~\cite[Inequality~(5.38)]{cm06}): there exists $ m \in \Nn $ such that 
\begin{equation}
\label{eq:m}
\liminf_{\tau \to 0^+} \sup_{\Gamma \in \mathcal{H}(\tau)} \sum_{\gamma \in S_m(\Gamma)} \dfrac{1}{a_m(\gamma)} < 1, 
\end{equation}
where $ \mathcal{H}(\tau) $ is the set of horizontal segments $ \Gamma \subset K_{f,Q} $ with $ |\Gamma|<\tau $, $ S_m(\Gamma) $ is the set of maximal subsegments $ \gamma $ of $ \Gamma $ such that $ \Phi^m_{f,Q}|_{\gamma} $ is differentiable, and $ a_m(\gamma) = \inf_{x \in \gamma} \|d_x\Phi^m_{f,Q}(1,0)^T\| $. 
Accordingly, to prove the proposition, it suffices to show that the $ m $-step expansion condition holds for a proper $ m \in \Nn $. 

Let $ P \in \PP^*_n $, and denote by $ L $ the length of the shortest side (or sides) of $ P $. Choose $ \bar{\alpha}>2 $, and let $ \delta $, $ m $, $ r $ and $ \beta_0 $ be the positive constants as in Proposition~\ref{pr:analytic}. Consider  $ f \in \mathcal{R}^2 $ with $ \lambda(f)<\delta $ and $ Q \in \PP_n $ with $ d(Q,P)<\delta $. If necessary, take a smaller $ \delta $ so that the length of the shortest side (or sides) of $ Q $ is less than $ 3L/2 $. Choose $ \delta =  \beta^{-m}_0 \cdot \min\{r,l\} $, and let $ \Gamma \in \mathcal{H}(\tau) $. 

If $ N^+_{f,Q} \cap \Phi^i_{f,Q}(\Gamma) = \emptyset $ for every $ 0 \le i < m $, then $ \Phi^i_{f,Q}|_{\gamma} $ is differentiable, and $ \Phi^i_{f,Q}(\Gamma) $ consists of a single segment. Thus $ S_m(\Gamma) = \{\Gamma\} $, $ a_{m}(\Gamma) \ge \bar{\alpha} $ and 
\[
\sup_{\Gamma \in \mathcal{H}(\tau)} \sum_{\gamma \in S_m(\Gamma)} \dfrac{1}{a_m(\gamma)} = \frac{1}{\bar{\alpha}} < \frac{1}{2}. 
\]

Now, suppose that $ N^+_{f,Q} \cap \Phi^i_{f,Q}(\Gamma) \neq \emptyset $ for some $ 0 \le i < m $, and let $ 0 \le j < m $ be the smallest $ i $ with such a property. It follows that $ \Phi^j_{f,Q}(\Gamma) $ consists of several disjoint horizontals segments whose total length is less than $ L \beta^{-m+j}_0 < L $, because $ \beta^m_0 $ is the supremum of the expansion along the horizontal direction. Since $ Q $ is convex, and the length of its shortest side is less than $ 3L/2 $, $ \Phi^j_{f,Q}(\Gamma) $ must consist exactly of two horizontal segments, both having one endpoint in $ Y_Q $. The length of each segment is less than $ r \beta^{-m+j}_0 < r $. Hence, both segments are contained in $ H_{Q}(r) $. By Proposition~\ref{pr:analytic}, $ \Phi^m_{f,Q}|_{H_Q(r)} $ is differentiable and $ \alpha(\Phi^m_{f,Q}) \ge \bar{\alpha} $. So $ \pi_{m}(\Gamma) $ consists of two segments, and $ a_{m}(\gamma) \ge \bar{\alpha} > 2 $ for every $ \gamma \in \pi_{m}(\Gamma) $. Therefore,
\[
\sup_{\Gamma \in \mathcal{H}(\tau)} \sum_{\gamma \in S_m(\Gamma)} \dfrac{1}{a_m(\gamma)} = \frac{2}{\bar{\alpha}} < 1. 
\]

The previous estimates imply the desidered property,
\[
\liminf_{\tau \to 0^+} \sup_{\Gamma \in \mathcal{H}(\tau)} \sum_{\gamma \in S_m(\Gamma)} \dfrac{1}{a_m(\gamma)} < \frac{2}{\bar{\alpha}}<1. 
\]
\end{proof}

\begin{remark}
\label{re:sataev}
Property~(A) is a version adapted to billiards of a condition introduced by Sataev~\cite{Sataev92} for general maps with singularities, which in turn is a stronger version of Condition~H4 in~\cite{Pesin92}. Condition~H4 is key in proving the existence of SRB measures for hyperbolic maps with singularities.
\end{remark}

In the next theorem, we prove the existence of SRB measures for $ \Phi_{f,Q} $, and several ergodic properties of such measures.

\begin{theorem}
\label{th:SRB}
Suppose that $ P \in \PP^*_n $. Then there exists $ \delta>0 $ such that for every $ f \in \RR^2 $ with $ \lambda(f)<\delta $ and every $ Q \in \PP_n $ with $ d(Q,P)<\delta $, the attractor $ \Lambda = \Lambda_{f,Q} $ is hyperbolic and regular, and there exist countably many ergodic SRB measures $ \mu_{1},\mu_2,\ldots $ of $ \Phi = \Phi_{f,Q} $ and countably many Borel subsets $ E_0,E_1,E_2,\ldots $ of $ \Lambda $ such that 
\begin{enumerate}
\item $ \Lambda = \bigcup_{i=0} E_i $ and $ E_i \cap E_j = \emptyset $ for all $ i \neq j $;
\item $ E_i \subset D $, $ \Phi(E_i)=E_i $, $ \mu_i(E_i)=1 $ 
and $ \Phi|_{E_i} $ is ergodic with respect to $ \mu_i $ for every $ i \ge 1 $;
\item for every $ i \ge 1 $, there exist $ k_i \in \Nn $ disjoint subsets $ B^{1}_{i},\ldots,B^{k_i}_{i} $ such that 
\begin{enumerate}
\item $ E_{i} = \bigcup^{k_i}_{j=1} B^{j}_{i} $; 
\item $ \Phi(B^{j}_{i}) = B^{j+1}_{i} $ for $ j = 1,\ldots,k_{i}-1 $, and $ \Phi(B^{k_i}_{i}) = B^{1}_{i} $;
\item $ \Phi^{k_i}|_{B^{j}_i} $ with the normalized restriction of $ \mu_i $ to $ B^{j}_i $ is a Bernoulli automorphism; 
\end{enumerate}
\item If $ \mu $ is an SRB measure of $ \Phi $, then there exist $ \alpha_{1},\alpha_2,\ldots $ with $ \sum_{i=1} \alpha_{i} = 1 $ such that $ \mu = \sum_{i} \alpha_{i} \mu_{i} $; 
\item if $ x \in D^{-}_{\epsilon} $ and $ \nu $ is a probability measure of $ M $ supported on $ W^{u}_{loc}(x) $ absolutely continuous with respect to the Riemannian volume of $ W^{u}_{loc}(x) $ and with density $ \kappa(x,\cdot) $ (see~\cite[Proposition~6]{Pesin92}), then every weak-* limit point of $
\mu_{n} = n^{-1} \sum^{n-1}_{k=0} \Phi^{k}_{*} \nu $ is an SRB measure of $ \Phi $; 
\item the set of periodic points of $ \Phi $ is dense in $ \Lambda $; 
\item for every $ i \ge 1 $, there exist $ C>0 $, $ \alpha>0 $ and $ r_0>0 $ such that $ \mu_i(N^{+}_{f,Q}(r)) \le C r^\alpha $ for every $ 0 < r < r_0 $.
\end{enumerate}
\end{theorem}

\begin{proof}
The theorem follows from results by Pesin on the existence and properties of SRB measures for general hyperbolic piecewise smooth maps. More precisely, conclusions~(1)-(4) follow from~\cite[Theorem~4]{Pesin92}, conclusion~(5) follows from \cite[Theorem~1]{Pesin92}, conclusion~(6) follows from \cite[Theorem~11]{Pesin92}, and conclusion~(7) follows from~\cite[Proposition~12]{Pesin92}. See also~\cite{Sataev92}, where Sataev obtained results that are stronger than those of Pesin (but under stronger hypotheses). To justify our claim, we need to show that the map $ \Phi_{f,Q} $ satisfies the hypothesis of Pesin's paper: the conditions called H1-H4 and the condition that $ \Lambda_{f,Q} $ is hyperbolic.

The map $ \Phi_{f,Q} $ satisfies conditions~H1 and~H2, because so does the billiard map $ \Phi_P $ with the specular reflection law (see~\cite[Theorem~7.2]{KS86}), and $ f $ and $ f^{-1} $ have bounded second derivates since $ f \in \mathcal{R}^2 $.
Since $ P \in \PP^*_n $, Lemma~\ref{le:expansion} and Proposition~\ref{pr:h} implies, respectively, the hyperbolicity of $ \Lambda_{f,Q} $ and Properties~(A) and (B) for every $ f \in \mathcal{R}^1 $ and every $ Q \in \PP_n $ sufficiently close to $ P $. Finally, Property~(A) implies~H3 (which is just the regularity of $ \Lambda_{f,Q} $) by~\cite[Proposition~3]{Pesin92}, and~(B) implies~H4.
\end{proof}

We call the sets $ E_1,\ldots,E_m $ the \emph{ergodic components} of $ \Phi $, and we call the sets $ B^1_i,\ldots,B^{k_i}_i $ the \emph{Bernoulli components} of $ E_i $.

\begin{remark}
Under the extra hypothesis that $ f'(\theta)>0 $ for every $\theta\in (-\pi/2,\pi/2) $, the previous theorem follows from a general result on polygonal billiards with contracting reflection laws~\cite[Theorem~4.12]{lxds}. 
As proved in Theorem~\ref{th:SRB}, that condition can be dropped when $ \lambda(f) $ is sufficiently small. 
\end{remark}

%

\subsection{Continuation of periodic points}
Throughout this section, $ P $ and $ Q $ are assumed to be polygons in $ \PP_n $ without parallel sides facing each other, and $ f $ is assumed to be a reflection law in $ \mathcal{R} $. Note that for such an $ f $, the map $ \Phi_{f,Q} $ is not necessarily invertible. Since $ P $ does not have parallel sides facing each other, $ \psi_P $ is piecewise expanding.

We prove in Theorem~\ref{th:fp} that each periodic point of the slap map $ \psi_P $ 
admits a continuation to a hyperbolic periodic point of the billiard map $ \Phi_{f,Q} $ provided that $ \lambda(f) $ and $ d(Q,P) $ are sufficiently small. 

Denote by $ B(x,r) $ the open ball of $ S^{1} \times (-\pi/2,\pi/2) $ centered at $ x $ of radius $ r>0 $. Also, given $ x \in D^{+}_{f,Q,\epsilon} $, we call the two curves contained in $ W^{s}_{loc}(x) $ having as endpoints $ x $ and a point of $ \partial W^{s}_{loc}(x) $ the \emph{components} of $ W^{s}_{loc}(x) $. 

\begin{theorem}
\label{th:fp}
Let $ s $ be a periodic point of $ \psi_{P} $ of period $ m \in \Nn $ whose orbit does not visit $ V_P $. Then there exist positive constants $ \delta, r $ and $ \ell $
such that
if $ \lambda(f)<\delta $ and $ d(Q,P)<\delta $, then 
\begin{enumerate}
\item $ \Phi_{f,Q} $ has exactly one hyperbolic periodic point $ x_{f,Q} $ of period $ m $ in $ B((s,0),r) $ converging to $ (s,0) $ as $ \lambda(f) + d(Q,P) \to 0 $,
\item the slope of $ E^s(x_{f,Q}) $ is smaller than $- 1/(2t_P(s,0))$,
\item if $ \gamma $ is a component of $ W^s_{loc}(x_{f,Q})) $, then $ |\gamma| \ge \ell $.
\end{enumerate}
\end{theorem}

Note that $ x := (s,0) $ is a hyperbolic periodic point of $ \Phi_{0,P} $ of period $ m $. 

\begin{defn}
\label{de:continuation}
We call $ x_{f,Q} $ the \emph{continuation} of $ s $ (or $ x = (s,0) $).	
\end{defn}  


To prove~Theorem~\ref{th:fp}, we need Lemma~\ref{le:fp}. Let $ d_1 $ be the $ C^1 $-distance between maps.

\begin{lemma}
\label{le:fp}	
Let $ s \in S^1 $, and suppose that there exists $ m \in \Nn
$ such that $ \psi^i_P(s) \notin V_P $ for every $ 0 \le i \le m-1 $. There are $ \delta_0>0 $ and $ r_0>0 $ such that if $ \lambda(f)<\delta_0 $, $ d(P,Q)<\delta_0 $ and $ x = (s,0) $, then the restrictions $ \Phi^m_{0,P}|_{B(x,r_0)} $ and $ \Phi^m_{f,Q}|_{B(x,r_0)} $ are both differentiable.
\end{lemma}

\begin{proof}
By the hypothesis on $ s $, there exists $ r_0>0 $ such that the restriction $ \Phi^m_{0,P}|_{B(x,r_0)} $ is differentiable. This implies that the components of the map $ \Phi^m_{f,Q} $	and the entries of the matrix $ d_x \Phi^m_{f,Q} $ are continuous in the variables $ f $ and $ Q $ at $ f=0 $ and $ Q=P $. Hence, there exists $ \delta_0>0 $ such that if $ \lambda(f)<\delta_0 $ and $ d(Q,P)<\delta_0 $, then $ \Phi^n_{f,Q}|_{B(x,r_0)} $ is differentiable as well. 
\end{proof}

\begin{remark}
By the previous lemma, we can think of $ \Phi^n_{f,Q}|_{B(x,r_0)} $ as a perturbation of $ \Phi^m_{0,P}|_{B(x,r_0)} $ (the unperturbed map). Then, Theorem~\ref{th:fp} turns out to be a corollary of a general theorem on the persistence of hyperbolic periodic points of smooth maps (without singularities) under small perturbations. See, for instance, Theorem~2.6 in the book~\cite{Wen}. However, to use this in our setting, we need a remark. The map $ \Phi^m_{0,P}|_{B(x,r_0)} $ is an endomorphism, whereas the aforementioned theorem assumes that the unperturbed map is a $ C^1 $ diffeomorphism. Nevertheless, it is clear from its proof, that the theorem continues to hold true even when the unperturbed map is just a $ C^1 $ endomorphism. This is so, because even though Lemma~2.5 of~\cite{Wen} -- the key step in the proof of Theorem~2.6 -- is formulated for hyperbolic linear isomorphisms, it is actually valid for hyperbolic linear endomorphisms (see also~\cite[Section~2.1]{Yo}). 
\end{remark}


\begin{proof}[Proof of Theorem~\ref{th:fp}]
Since the orbit of $ s $ does not visit $ V_P $, the $ \Phi_{0,P} $-orbit of $ x = (s,0) $ is defined, and $ x = (s,0) $ is a fixed point of $ \Phi^m_{0,P} $. Moreover, since $ P $ does not have parallel side facing each other, $ x $ is a hyperbolic periodic point. We can then apply Lemma~\ref{le:fp} to $ \Phi^m_{0,P} $ and $ x $. Let $ r_0>0 $ and $ \delta_0>0 $ be as in the lemma. Next, we apply~\cite[Theorem~2.6]{Wen} to $ \Phi^m_{0,P}|_{B(x,r_0)} $ and its perturbation $ \Phi^m_{f,Q}|_{B(x,r_0)} $ with $ \lambda(f)<\delta_0 $ and $ d(Q,P)<\delta_0 $. It follows that there exist $ 0 < \delta < \delta_0 $ and $ 0 < r < r_0 $ such that if $ \lambda(f)<\delta $ and $ d(Q,P)<\delta $, then $ \Phi^m_{f,Q}|_{B(x,r)} $ has a unique fixed point $ x_{f,Q} $ with the property that $ x_{f,Q} \to x $ as $ \lambda(f) + d(Q,P) \to 0 $. Since $ Q $ does not have sides facing each other, the map $ \Phi_{f,Q} $ is uniformly hyperbolic, implying that $ x_{f,Q} $ is hyperbolic. This proves conclusion~(1) of the theorem. 

Now, consider the fixed point $ x $ of $ \Phi^m_{0,P} $. Note that the $ (2,2) $-entry of $ d_x \Phi^m_{0,P} $ is equal to 0, since $ f = 0 $ in this case. Then, using~\eqref{eq:differential}, one can easily show that the slope of the stable direction of $ x $ is equal to $ -1/t_{P}(x) $. Since the entries of the matrix $ d_{x_{f,Q}} \Phi^m_{f,Q} $ are continuous functions of $ (f,Q) $ at $ f=0 $ and $ Q=P $, so is the slope of the stable direction of $ x_{f,Q} $. Thus, by further shrinking $ \delta $ if necessary, we obtain conclusion~(2) of the theorem with the lower bound for the slope equal to $ - 1/(2t_{P}(x)) $.

Now, we assume that $ \lambda(f)<\delta $ and $ d(Q,P)<\delta $. Since $ x_{f,Q} $ is a periodic point, the distance of its orbit from the singular set $ N^+_{f,Q} $ is bounded away from zero uniformly in $ f $ and $ Q $ (chosen as above). This implies that $ x_{f,Q} \in D^{+}_{f,Q,\epsilon,l} $ for some $ l \in \Nn $ uniformly in $ (f,Q) $. By Pesin's theory, there exists $ \ell>0 $ such that the length of each component of $ W^s_{loc}(x) $ is greater than or equal to $ \ell $ for every $ x \in D^{+}_{f,Q,\epsilon,l} $. This implies conclusion~(3) of the theorem.  
\end{proof}

\subsection{A criterion for ergodicity}
In this section, we will assume implicitly that $P$ is a polygon without parallel sides facing each other and that $ f \in \mathcal{R}^2 $. Hence, $ \Lambda_{f,P} $ is hyperbolic by Proposition~\ref{pr:hyperbolicity}. In particular, every periodic point of $ \Phi_{f,P} $ is hyperbolic. 

Recall that $ D^{\pm}_{f,P,\epsilon} $ and $ D^{\pm}_{f,P,\epsilon,l} $ are the Pesin sets of $ \Phi_{f,P} $ introduced in Subsection~\ref{su:pesin}. Given a $ C^1 $-curve $ W \subset K $, recall that $ \Vol_{W} $ denotes the normalized length of $ W $, i.e., the normalized volume of $ W $ induced by the metric $ d_{M} $. 

%

The next lemma plays a crucial role in the proof of Theorem~\ref{th:erg-periodic}. It tells us about the points contained in a given horizontal segment where local stable manifolds exist (c.f.~\cite[Proposition~3.4]{Sataev92} and~\cite[Lemma~1]{Pesin92}). The lemma is a consequence of Property~(B) in Proposition~\ref{pr:h}. 

\begin{lemma}
\label{le:dplus}
Let $ \Gamma $ be a horizontal segment. For every $ 0 < \tau < 1 $, there exists $ l_+ \in \Nn $ such that 
\[
\Vol_{\Gamma}(\Gamma \cap D^{+}_{f,P,\epsilon,l}) \ge 1-\tau \qquad \text{for all } l \ge l_+.
\] 	
\end{lemma}

\begin{proof}
Note that if $ x \in \Gamma \setminus D^+_{f,P,\epsilon,l} $, then $ d_{M}(\Phi^n_{f,P}(x),N^+_{f,P})<l^{-1} e^{-\epsilon n} $ for some $ n \ge 0 $. Thus, \[ 
\Gamma \setminus D^+_{f,P,\epsilon,l} \subset \bigcup^{\infty}_{n=0} \Gamma \cap \Phi^{-n}_{f,P}((N^+_{f,P})(l^{-1} e^{-\epsilon n})).
\]
By Property~(B) of Proposition~\ref{pr:h}, we obtain 
\begin{align*}
\Vol_{\Gamma}(\Gamma \setminus D^{+}_{f,P,\epsilon,l}) & \le \sum^{\infty}_{n=0} \Vol_{\Gamma}(\Gamma \cap \Phi^{-n}_{f,P}((N^+_{f,P})(l^{-1} e^{-\epsilon n})) \\
& \le \frac{B}{l} \sum^{\infty}_{n=0} e^{-\epsilon n} = \frac{B}{l(1-e^{-\epsilon})}.
\end{align*}
Hence,
\begin{align*}
\Vol_{\Gamma}(\Gamma \cap D^+_{f,P,\epsilon,l}) & = 1 - \Vol_{\Gamma}(\Gamma \setminus D^{+}_{f,P,\epsilon,l}) \\
& \ge 1 - \frac{B}{l(1-e^{-\epsilon})},
\end{align*}
which yields the wanted conclusion.
\end{proof}

Denote by $ C(\Lambda_{f,P}) $ be set of all continuous functions on the attractor $ \Lambda_{f,P} $. For every $ \varphi \in C(\Lambda_{f,P}) $, let
\[
\varphi^{+}(x) = \lim_{n \to +\infty} \frac{1}{n} \sum^{n-1}_{k=0} \varphi(\Phi^{k}_{f,P}(x))
\]
be the forward Birkhoff average of $ \varphi $. Also, let $ \mu_1,\mu_2,\ldots $ be the ergodic SRB measures of $ \Phi_{f,P} $, and let $ E_1,E_2,\ldots $ be the corresponding sets as in Theorem~\ref{th:SRB}. 

\begin{defn}
\label{de:generic}
For every $ i $, define 
\[ 
\mu_i(\varphi) = \int_{\Lambda} \varphi(x) \, d\mu_i(x) \qquad  \forall \varphi \in C(\Lambda_{f,P}),
\]
and 
\[
R_i = \left\{x \in E_i \colon \varphi^{+}(x) = \mu_i(\varphi) \quad \forall \varphi \in C(\Lambda_{f,P}) \right\}.
\]
\end{defn}

Since the sets $ E_i $'s are $ \Phi_{f,P} $-invariant and pairwise disjoint, so are the sets $ R_i $'s. Moreover, the separability of $ C(\Lambda_{f,P}) $ and the Birkhoff Ergodic Theorem imply that $ \mu_i(R_i)=\mu_i(E_i)=1 $ for every $ i $.

\begin{defn}
\label{de:typical}
For every $ i $, define $ \Delta_i $ to be the set of all 
$ x \in D^-_{f,P,\epsilon} $ for which there exists an open disk $ V_x $ in $ W^u_{loc}(x) $ containing $ x $ such that $ \Vol_{V_x}(V_x \cap R_i) = 1 $. 
\end{defn}

\begin{remark}
\label{re:full}
By the property of the conditional measures of an SRB measure, it follows that $ \mu_i(\Delta_i)=1 $ (see the definition of a $ u $-measure and the paragraph before~\cite[Proposition~9]{Pesin92}).
\end{remark}

The next results play a central role in the proofs of Theorem~\ref{thm:main2}, permiting to characterize the sets $ E_i $'s and the number of their Bernoulli components using the periodic points of the map $ \Phi $. We remark that our Theorem~\ref{th:erg-periodic} is similar to~\cite[Theorem~5.1]{Sataev99}.

%

\begin{proposition}
\label{pr:erg-periodic}
Let $ x \in \Delta_i $, and suppose that there are a periodic point $ x_0 $ of $ \Phi_{f,P} $ and an integer $ n \ge 0 $ such that 
\[ 
W^{s}_{loc}(x_{0}) \cap \Phi^{n}_{f,P}(V_{x}) \neq \emptyset,
\] 
where $ V_{x} $ is as in the definition of $ R_i $. Then there exists an open disk $ W $ of $ x_0 $ in $ W^{s}_{loc}(x_0) $ and a set $ W' \subset W \cap R_i $ such that $ \Vol_{W}(W')=1 $.
\end{proposition}

\begin{proof}
By hypothesis, there exists $ y \in W^{s}_{loc}(x_0) \cap \Phi^{n}_{f,P}(V_{x}) $. Let $ p $ be the period of $ x_0 $, and define $ y_k = \Phi^{kp}_{f,P}(y) $ for all $ k \ge 0 $. Clearly, we have $ y_k \in W^{s}_{loc}(x_0) $ and $ \lim_{k \to +\infty} y_k = x_0 $. 

Since the attractor $ \Lambda_{f,P} $ is hyperbolic, there exists $ k_0 \ge 0 $ such that
\[ 
W^u_{loc}(y_k) \cap \Phi^{n_1+kp}_{f,P}(V_{x}) = W^u_{loc}(y_k) \qquad \text{for all } k \ge k_0.
\]
Let $ k \ge k_0 $. We have $ V_k:=\Phi^{-(n_1+kp)}_{f,P}(W^u_{loc}(y_k)) \subset V_{x} $. Moreover, since the probability measure $ (\Phi^{n_1+kp}_{f,P})_*(\Vol_{V_k}) $ is equivalent to $ \Vol_{W^{u}_{loc}(y_k)} $, and the set $ R_i $ is $ \Phi $-invariant, we have
\[
\Vol_{W^u_{loc}(y_k)}(W^u_{loc}(y_k) \cap R_i)=1.
\]

The periodic point $ x_0 $ is hyperbolic, and so $ x_0 \in D^{-}_{f,P,\epsilon,l_-} $ for some $ l_- \in \Nn $. It follows that there is $ k_1 \ge k_0 $ such that 
\begin{equation}
\label{eq:size}	
y_k \in D^-_{f,P,\epsilon,2l_-} \qquad \text{for all } k \ge k_1. 
\end{equation}
This implies, in particular, that the size of $ W^{u}_{loc}(y_k) $ is uniformly bounded from below by some positive constant depending on $ \epsilon $ and $ l_- $. Since $ D^-_{f,P,\epsilon,2l_-} $ is closed, by~\cite[Propositions~1 and~4]{Pesin92}, the sequence of curves $ W^{u}_{loc}(y_k) $ converges in the $ C^1 $-topology to an open disk $ W \subset W^{u}_{loc}(x_0) $ containing $ x_0 $. 

By Lemma~\ref{le:dplus}, for every $ 0 < \delta < 1 $, there exists $ l_+ \ge 0 $ such that
\[
\Vol_{W}(W \cap D^{+}_{f,P,\epsilon,l_+}) \ge 1-\delta.
\] 
This together with~\eqref{eq:size} implies that there exist an open disk $ W_1 \subset W $ containing $ x_0 $ and an integer $ k_2 \ge k_1 $ such that 
\[
\Vol_{W}(W_1 \cap D^+_{f,P,\epsilon,l_+}) \ge 1 - \delta/2,
\] 
and
\[
W^s_{loc}(w) \cap W^u_{loc}(y_{k_2}) \neq \emptyset \qquad \text{for all } w \in W_1 \cap  D^+_{f,P,\epsilon,l_+}.
\]

Let 
\[
W_2 = \left\{w \in W_1 \cap  D^+_{f,P,\epsilon,l_+} \colon W^s_{loc}(w) \cap W^u_{loc}(y_{k_2}) \cap R_i \neq \emptyset \right\}.
\]
It is a well known fact that if $ \varphi \in C(\Lambda) $, $ w \in D^+_{f,P,\epsilon} $ and $ \varphi^{+}(w) $ exists, then $ \varphi^{+}(z) = \varphi^{+}(w) $ for every $ z \in W^s_{loc}(x) $. Therefore, if $ w \in W_2 $, then $ W^s_{loc}(w) \cap R_i \neq \emptyset $, and so $ \varphi^{+}(w)=\mu_i(\varphi) $ for every $ \varphi \in C(\Lambda_{f,P}) $. By the absolute continuity of the stable foliation (see~\cite[Proposition~10]{Pesin92}),
\[
\Vol_{W}(W_2) = \Vol_{W}(W_1 \cap D^+_{f,P,\epsilon,l_+}) \ge 1-\delta/2.
\]

We have just proved that for every $ 0 < \delta < 1 $, there exists a subset $ W_2=W_2(\delta) \subset W $ such that $ \Vol_{W}(W_2) \ge 1 - \delta/2 $, and $ \varphi^+|_{W_2} = \mu_i(\varphi) $ for every $ \varphi \in C(\Lambda_{f,P}) $. From this, it is straightforward to obtain the existence of a set $ W' \subset W $ such that $ \Vol_{W}(W')=1 $ and $ \varphi^+|_{W'} = \mu_i(\varphi) $ for every $ \varphi \in C(\Lambda_{f,P}) $. 
\end{proof}

\begin{theorem}
\label{th:erg-periodic}
Let $ x_1 \in \Delta_i $ and $ x_2 \in \Delta_j $, and suppose that there are a periodic point $ x_0 $ of $ \Phi_{f,P} $ and two integers $ n_1 \ge 0 $ and $ n_2 \ge 0$ such that $ W^{s}_{loc}(x_{0}) \cap \Phi^{n_{1}}_{f,P}(V_{x_1}) \neq \emptyset $ and $ W^{s}_{loc}(x_{0}) \cap \Phi^{n_{2}}_{f,P}(V_{x_2}) \neq \emptyset $, where $ V_{x_1} $ and $ V_{x_2} $ are the sets corresponding to $ x_1 $ and $ x_2 $ as in the definition of $ R_i $. Then $ i=j $. 
\end{theorem}


\begin{proof}
Both $ x_1 $ and $ x_2 $ satisfy the hypotheses of Proposition~\ref{pr:erg-periodic}. Thus, there exist two open disks $ W_1 $ and $ W_2 $ of $ x_0 $, and two sets $ W'_1 \subset W_1 \cap R_i $ and $ W'_2 \subset W_2 \cap R_j $ such that $ \Vol_{W_1}(W'_1)=\Vol_{W_2}(W'_2)=1 $. It follows that $ R_i \cap R_j \neq \emptyset $. Hence $ R_i = R_j $, i.e., $ i = j $.
\end{proof}

We now prove a proposition that allows us to estimate the number of Bernoulli components of an ergodic SRB measure. 

Let $ \mu = \mu_i $ be one of the ergodic SRB measures of $ \Phi =\Phi_{f,P} $, and let $ E = E_i $ be the corresponding ergodic component. Suppose that $ \mu $ has $ n $ Bernoulli components. Then, it follows from Theorem~\ref{th:SRB} that there exist an integer $ n > 0 $ and a probability measure $ \mu' $ such that $ \Phi^n $ endowed with $ \mu' $ is a Bernoulli automorphism, and $ \mu $ is the arithmetic average of $ \mu',\Phi_{*} \mu',\ldots,\Phi^{n-1}_{*} \mu' $.
%
Define the sets $ R' $ and $ \Delta' $ for the measure $ \mu' $ and the map $ \Phi^n $ exactly as the sets $ R_i $ and $ \Delta_i $ for the measure $ \mu_i $ and the map $ \Phi $ in Definitions~\ref{de:generic} and \ref{de:typical}. Remark~\ref{re:full} applies to $ \Delta' $ and $ \mu' $ as well, and so $ \mu'(\Delta')=1 $. Also, given $ x \in \Delta' $, let $ V_x $ be the open disk in $ W^u_{loc}(x) $ containing $ x $ as in Definition~\ref{de:typical}.

\begin{proposition}
\label{pr:period}
Let $ x \in \Delta' $, and suppose that there exist an integer $ j  \ge 0 $ and a periodic point $ x_0 $ of $ \Phi $ of period $ p $ such that
\[
W^s_{loc}(x_0) \cap \Phi^{j}(V_x) \neq \emptyset.
\]
Then $ n $ is a divisor of $ p $. 
\end{proposition}

\begin{proof}
By Proposition~\ref{pr:erg-periodic}, there exists a neighborhood $ W \subset W^u_{loc}(x_0) $ of $ x_0 $ and a set $ W' \subset W \cap R' $ with $ \Vol_{W}(W')=1 $. Since $ x_0 $ is periodic and $ W' \subset W^u_{loc}(x_0) $, we have $ \Phi^{-p}(W') \subset W' $. Moreover, $ \Vol_{W}(\Phi^{-p}(W'))>0 $ because the measure $ \Phi^{p}_* \Vol_{W} $ is equivalent to $ \Vol_{\Phi^{-p}(W)} $.
This yields 
\[
W' \cap \Phi^{-p}(W') \neq \emptyset. 
\]
Since $ W' \subset R' $, the set $ W' $ is contained in a Bernoulli component of $ \mu $, and so $ n $ must be a divisor of $ p $.
\end{proof}

\begin{remark}
The proofs of the propositions in this subsection do not use any specific property of $ \Phi_{f,P} $ or of its local invariant manifolds. For this reason, although the propositions were proved only for the billiard map $ \Phi_{f,P} $, they apply to general hyperbolic map with singularities. 
\end{remark}

\section{Proof of Theorem~\ref{thm:main2}}
\label{se:bijection theta}

In this section, we prove Theorems~\ref{th:main-partone}, \ref{th:bernoulli} and~\ref{th:basin} and Corollary~\ref{co:main-partone}. Together, they form Theorem~\ref{thm:main2} (the same as Theorem~\ref{th:main}). 

Throughout this section, we assume that $ P \in \PP^*_n $. Recall that $ \SRB(\psi_P) $ and $ \SRB(\Phi_{f,Q}) $ denote the set of ergodic acips of $ \psi_P $ and the set of the ergodic SRB measures of $ \Phi_{f,Q} $, respectively.

\subsection{The set $ \FF(\nu) $}
Consider $ \nu \in \SRB{\psi_P} $. Let $ \eta>0 $ be as in Lemma~\ref{le:growth}. From Part~(3) of Theorem~\ref{th:slap acip} and Condition~(*), it follows that there exists a finite subset $ \FF(\nu) $ of $\operatorname{int}(\supp \nu) $ with the following properties:
\begin{enumerate}
\item $ \FF(\nu) $ consists of periodic points of $ \psi_P $ whose orbits do not visit any vertex of $ P $,
\item $ \FF(\nu) $ is $ \eta/6 $-dense in $ \supp \nu $,
\item for every $ s \in \FF(\nu) $, there exists $ z \in (s-\eta/9,s+\eta/9) \cap \FF(\nu) $ with $ z \neq s $ such that the great common divisor of the periods of $ s $ and $ z $ equals the number of the exact components of $ \nu $.	
\end{enumerate}

Given $ s \in S^1 $ and $ r>0 $, let $ B(s,r) = (s-r,s+r) $ be the open interval of $ S^1 $ centered at $ s $ of radius $ r $.

\begin{lemma}
\label{le:uno}	
For any pair $ s_1,s_2 \in \FF(\nu) $, there exist an open interval $ I \subset B(s_1,\eta/3) \cap \supp \nu $, integers $ k,m \ge 0 $ and $ s_0 \in \FF(\nu) $ such that 
\begin{enumerate}
\item $ \psi^{k+m}_{P}|_{I} $ is differentiable (so $ \psi^{k}_{P}(I) $ and $ \psi^{k+m}_{P}(I) $ are intervals), 
\item $ \psi^{k}_{P}(I) \subset B(s_2,\eta/3) \cap \supp \nu $,
\item $ |\psi^{k+m}_{P}(I)|>\eta $,
\item $ B(s_0,\eta/3) \subset \psi^{k+m}_{P}(I) \subset \supp \nu $.
\end{enumerate}
\end{lemma}

\begin{proof}
Let $ B_i = B(s_i,\eta/3) $ for $ i=1,2 $. From $ s_i \in \operatorname{int}(\supp \nu) $, it follows that $ B_i \cap \supp \nu $ is an interval, and so $ \nu(B_i)>0 $. Since $ \nu $ is ergodic, there exist $ s \in B_1 \cap \supp \nu $ and an integer $ k \ge 0 $ such that $ \psi^k_P(s) \in B_2 $.
The set of points whose forward orbit meets a vertex of $ P $ has zero $ \nu $-measure. Then, we can assume without loss of generality that $ \psi^i_P(s) \notin V_P $ for every $ 0 \le i \le k $. Thus,  there exists a subinterval $ I_1 $ of $ B_1 \cap \supp \nu $ with $ s \in \operatorname{int}(I_1) $ such that $ \psi^k_P(I_1) $ is differentiable. In particular, $ \psi^k_P(I_1) $ is a subinterval of $ B_2 \cap \supp \nu $. 

By Lemma~\ref{le:growth}, there are an integer $ m \ge 0 $ and an open interval $ J \subset \psi^k_P(I_1) $ such that $ {\psi^m_P}|_J $ is differentiable, and $ \psi^m_P(J) $ is an interval contained in $ \supp \nu $ with $ |\psi^m_P(J)|>\eta $. We conclude that there exists an open interval $ I \subset I_1 $ with $ \psi^k_P(I)=J $ such that $ {\psi^{k+m}_P}_I $ is differentiable, $ \psi^{k+m}_P(I) \subset B_2 \cap \supp \nu $, $ \psi^{k+m}_P(I) \subset \supp \nu $ and $ |\psi^{k+m}_P(I)|>\eta $. Finally, since $ \FF(\nu) $ is $ \eta/6 $-dense in $ \supp \nu $, there exists $ s_0 \in \FF(\nu) $ such that $ B(s_0,\eta/3) \subset \psi^{k+m}_P(I) $.
\end{proof}


\begin{lemma}
\label{le:unomezzo}
There exists $ \delta_1>0 $ such that for any $ s_1,s_2 \in \FF(\nu) $, there are integers $ m_1,m_2 \ge 0 $ and $ s_0 \in \FF(\nu) $ for which if $ \lambda(f)<\delta_1 $, $ d(Q,P)<\delta_1 $ and $ \Gamma_1 $ and $ \Gamma_2 $ are horizontal segments satisfying $ B(s_i,\eta/3) \subset \pi_{s} \left(\Gamma_i\right) $ for $ i=1,2 $, then
\[
B(s_0,\eta/3) \subset \pi_{s} \left(\Phi^{m_i}_{f,Q}(\Gamma_i)\right), \qquad i=1,2.
\]
\end{lemma}

\begin{proof}
For any pair of points $ s_1,s_2 \in \FF(\nu) $, denote bs $ I(s_1,s_2) $, $ s_0(s_1,s_2) $, $ m(s_1,s_2) $ and $ k(s_1,s_2) $ the interval, the point of $ \FF(\nu) $ and the two positive integers 
as in Lemma~\ref{le:uno}. Next, define 
\[ 
m_1(s_1,s_2) = m(s_1,s_2)+k(s_1,s_2) \quad \text{and} \quad m_2(s_1,s_2)=k(s_1,s_2).
\]
Since $ \FF(\nu) $ is finite, $ m_1 $ and $ m_2 $ are bounded functions on $ \FF(\nu) \times \FF(\nu) $. For this reason, the assumption on $ \Gamma_1 $ and $ \Gamma_2 $ and Lemma~\ref{le:uno},
we can find a $ \delta_1>0 $ such that if $ \lambda(f)<\delta_1 $ and $ d(Q,P)<\delta_1 $, then for any pair $ s_1,s_2 \in \FF(\nu) $, the sets $ \Phi^{m_1(s_1,s_2)}_{f,Q}(\Gamma_1) $ and $ \Phi^{m_2(s_1,s_2)}_{f,Q}(\Gamma_2) $ will be so close to the intervals $ \psi^{m_1(s_1,s_2)}_{P}(I(s_1,s_2)) $ and $ \psi^{m_2(s_1,s_2)}_{P}(I(s_1,s_2)) $ that
\[
B\left(s_0(s_1,s_2),\eta/3)\right) \subset \pi_{s} \left(\Phi^{m_i(s_1,s_2)}_{f,Q}(\Gamma_i)\right), \qquad i=1,2.
\]	
\end{proof}

\subsection{The set $ \FF_{f,Q}(\nu) $}
\label{su:F}
For every $ s \in \FF(\nu) $, denote by $ \delta(s) $, $ \kappa(s)=2t_P(s,0) $ and $ \ell(s) $ the constants in Theorem~\ref{th:fp}. Next, define $ \bar{\delta} = \min_{s \in \FF(\nu)} \delta(s) $, $ \bar{\kappa} = \min_{s \in \FF(\nu)} \kappa(s) $ and $ \bar{\ell} = \min_{s \in \FF(\nu)} \ell(s) $.
 If $ \lambda(f)<\min\{\delta_1,\bar{\delta}\} $ and $ d(Q,P)<\min\{\delta_1,\bar{\delta}\} $, then Theorem~\ref{th:fp} says that there exists a continuation $ x_{f,Q} = (s_{f,Q},\theta_{f,Q}) $ for every $ (s,0) $ with $ s \in \FF(\nu) $. 

\begin{lemma}
\label{le:distance}
There is $ 0<\delta_2<\min\{\delta_1,\bar{\delta}\} $ such that if $ \lambda(f)<\delta_2 $ and $ d(Q,P)<\delta_2 $, then for every $ s \in \FF(\nu) $, the local stable manifold $ W^s_{loc}(x_{f,Q}) $ of the continuation of $ (s,0) $ intersects both lines $ \theta = - \lambda(f) \pi/2 $ and $ \theta = - \lambda(f) \pi/2 $ at points with $ s $-coordinate contained in the interval $ (s-2\eta/9,s+2\eta/9) $.	
\end{lemma}

\begin{proof}
Suppose that $ \lambda(f)<\min\{\delta_1,\bar{\delta}\} $ and $ d(Q,P)<\min\{\delta_1,\bar{\delta}\} $. A straightforward computation -- which we omit -- shows that if we also require 
\[
\lambda(f) < \min \left \{\dfrac{\bar{\ell}}{\pi \sqrt{1+\bar{\kappa}^2}},\dfrac{\eta}{9 \pi} \right\},
\]
then for every $ s \in \FF(\nu) $, the local stable manifold of the continuation $ x_{f,Q} $ of $ (s,0) $ intersects both lines $ \theta = - \lambda(f) \pi/2 $ and $ \theta = - \lambda(f) \pi/2 $ at points with $ s $-coordinate contained in $ (s_{f,Q}-\eta/9,s_{f,Q}+\eta/9) $. The existence of $ 0<\delta_2<\min\{\delta_1,\bar{\delta}\} $ with the wanted property follows from the fact that $ x_{f,Q} \to (s,0) $ as $ \lambda(f)+d(P,Q) \to 0 $. 
\end{proof}

For $ \lambda(f)<\delta_2 $ and $ d(Q,P)<\delta_2 $, define
\[
\FF_{f,Q}(\nu) = \left\{x_{f,Q} \in M_Q \colon s \in \FF(\nu)\right\}.
\]
Fix $ 0 < \zeta < \eta/6 $. Proposition~\ref{pr:trapping3} implies that there exists $ 0<\delta_3 < \delta_2 $ such that if $ \lambda(f) < \delta_3 $ and $ d(Q,P)<\delta_3 $, then there are trapping regions $ U(\nu) $ and  $ W(\nu) $ for $ \psi_{Q} $ and $ \Phi_{f,Q} $, respectively.
Finally, note that $ \FF(\nu) $ is $ \eta/3 $-dense in $ U(\nu) $ because of our choice of $ \zeta $.

\subsection{The set $ \GG $}
Let $ \BB \subset S^1 $ be the union of the basins of the ergodic acips of $ \psi_P $. For every $ s \in S^1 $ and $ r>0 $, define 
\[ 
\Sigma(s,r) = (s-r,s+r) \times \left(-\pi r/2,\pi r/2 \right).
\] 

\begin{lemma}
\label{le:g}
There exist a $ \eta/2 $-dense finite set $ \GG \subset \BB $ in $ S^1 $ and $ \delta_4>0 $ such that if $ \lambda(f)<\delta_4 $ and $ d(Q,P)<\delta_4 $, then for every $ s \in \GG $, there are an ergodic acip $ \tilde{\nu} \in \SRB(\psi_P) $ and an integer $ k \ge 0 $ for which $ \Phi^k_{f,Q}(\Sigma(s,\delta_4)) \subset \operatorname{int}(W(\tilde{\nu})) $.
\end{lemma}

\begin{proof}
Let $ \mathcal{S} $ be the set of all $ s \in \BB $ such that $ \psi^{i}_{P}(s) $ is a vertex of $ P $ for some $ i \ge 0 $. The set $ \mathcal{S} $ is at most countable, and $ \BB $ has full Lebesgue measure by Part~(4) of Theorem~\ref{th:slap acip}. Hence, $ \BB \setminus \mathcal{S} $ has full Lebesgue measure as well, and so it contains a finite set $ \GG $ that is $ \eta/2 $-dense in $ S^1 $. 

Let $ s \in \GG $. Then, there exists $ \tilde{\nu} \in \SRB(\psi_P) $ such that $ s \in B(\tilde{\nu}) $. Since $ \supp \tilde{\nu} \subset \operatorname{int}(U(\tilde{\nu})) $ (see Proposition~\ref{pr:trapping3}), Urysohn's Lemma guarantees the existence of a continuos function $ \phi \colon I \to [0,1] $ such that $ \phi $ is identically equal to $ 1 $ on $ \supp \tilde{\nu} $ and is identically equal to $ 0 $ on the closure of the complement of $ S^1 \setminus U(\tilde{\nu}) $.
Since $ s \in B(\tilde{\nu}) $,  
\[
\lim_{k \to +\infty} \frac{1}{k} \sum^{k-1}_{i=0} \phi(\psi_P(s)) = \int_{S^1} \phi(s) d\tilde{\nu}(s) = 1.
\]
Hence, there exists $ k \ge 0 $ such that $ \psi^{k}_{P}(s) \in \operatorname{int}(U(\tilde{\nu})) $. Since $ s \notin \mathcal{S} $, it follows that $ \psi^i_P(s) \notin V_P $ for every $ i $, and so
\[ 
\Phi^{k}_{0,P}(s,0) = (\psi^{k}_P(s),0).
\] 
This and the fact that $ U(\tilde{\nu}) \times \{0\} = W(\tilde{\nu}) $
imply $ \Phi^{k}_{0,P}(s,0) \in \operatorname{int}(W(\tilde{\nu})) $. 

Finally, note that the map $ (f,Q,x) \mapsto \Phi^{k}_{f,Q}(x) $ is continuous at $ (0,P,(s,0)) $. Thus, there exits $ \delta_4 > 0 $ such that 
\[ 
\Phi^{k}_{f,Q}(\Sigma(s,\delta_4)) \subset \operatorname{int}(W(\tilde{\nu})) 
\]
provided that $ \lambda(f)<\delta_4 $ and $ d(Q,P)<\delta_4 $. 
\end{proof}

\subsection{Ergodic SRB measures} 
The constant $ \delta_3 $ derived in Subsection~\ref{su:F} depends on the ergodic acip $ \nu $. To emphasize this dependence, we write $ \delta_3(\nu) $ instead of $ \delta_3 $. Define $ \bar{\delta}_3 $ to be the minimum of $ \delta_3(\nu) $ over all $ \nu \in \SRB(\psi_P) $. 

Choose $ 0<\delta_5<\min \{\bar{\delta}_3,\delta_4\} $ so that Theorem~\ref{th:SRB} applies to each $ \Phi_{f,Q} $ with $ \lambda(f) < \delta_5 $ and $ d(Q,P)<\delta_5 $, and guarantees that the existence of ergodic SRB measures for $ \Phi_{f,Q} $. In the rest of the proof, we will implicitly assume that $ \lambda(f) < \delta_5 $ and $ d(Q,P)<\delta_5 $. 

\begin{defn} 
For every $ \nu \in \SRB{\psi_P} $, define 
\[ 
H_{f,Q}(\nu) = \left\{ \mu \in \SRB(\Phi_{f,Q}) \colon \mu(\supp \mu \cap W(\nu))=1 \right\}.
\] 
\end{defn}

Given $ \mu \in H_{f,Q}(\nu) $, denote by $ R(\mu) $ and $ \Delta(\mu) $ the sets corresponding to $ \mu $ as in Definitions~\ref{de:generic} and \ref{de:typical}, respectively. By Remark~\ref{re:full}, $ \mu\left(\Delta(\mu) \cap W(\nu)\right) = 1 $.

\begin{lemma}
\label{le:due}
Let $ \mu \in H_{f,Q}(\nu) $. If $ x \in \Delta(\mu) \cap W(\nu) $, then there exist $ j \ge 0 $ and $ s \in \FF(\nu) $ such that 
\[
B(s,\eta/3) \subset \pi_{s}(\Phi^j_{f,Q}(V_x)), 
\]	
where $ V_x \subset W^u_{loc}(x) $ is the set associated to $ x $ as in Definition~\ref{de:typical}.
\end{lemma}

\begin{proof}
Since $ V_x $ is a horizontal segment, Lemma~\ref{le:growth} implies that there exists $ j \ge 0 $ such that $ \Phi^j_{f,Q}(V_x) $ contains a horizontal segment $ \Gamma $ with $ |\Gamma|>\eta $. Since $ \FF(\nu) $ is $ \eta/6 $-dense in $ U(\nu) $, and $ \pi_{s}(W(\nu)) = U(\nu) $, there is $ s \in \FF(\nu) $ such that 
\[
B(s,\eta/3) \subset \pi_{s}(\Gamma) \subset \pi_{s}(\Phi^j_{f,Q}(V_x)).
\]  
\end{proof}

\begin{lemma}
\label{le:tre}
$ \# H_{f,Q}(\nu) \le 1 $.
\end{lemma}

\begin{proof}
Let $ \mu_{n_1},\mu_{n_2} \in H_{f,Q}(\nu) $.
For $ i=1,2 $, 
pick $ x_i \in \Delta(\mu_{n_i}) \cap W(\nu) $. Such an $ x_i $ exists, because $ \mu_i(\Delta(\mu_{n_i}) \cap W(\nu))=1 $. By Lemma~\ref{le:due},  there exist $ j_1,j_2 \ge 0 $ and $ s_1,s_2 \in \FF(\nu) $ such that 
\begin{equation}
\label{eq:first}
B(s_i,\eta/3) \subset \pi_{s} \left(\Phi^{j_i}_{f,Q}(V_{x_i})\right), \qquad \text{for } i=1,2.
\end{equation}
It follows that there exist two horizontal segments $ \Gamma_1 \subset \Phi^{j_i}_{f,Q}(V_{x_1}) $ and $ \Gamma_2 \subset \Phi^{j_i}_{f,Q}(V_{x_2}) $ whose images under $ \pi_s $ contain the intervals $ B(s_1,\eta/3) $ and $ B(s_2,\eta/3) $, respectively.

By applying Lemma~\ref{le:unomezzo} to $ \Gamma_1 $ and $ \Gamma_2 $, we can conclude that there exist two integers $ m_1,m_2 \ge 0 $ and $ s_0 \in \FF(\nu) $ such that for each $ i=1,2 $,
\begin{equation}
\label{eq:three}
B(s_0,\eta/3) \subset \pi_{s} \left(\Phi^{j_i+m_i}_{f,Q}(V_{x_i})\right).	
\end{equation}

Let $ x_0 \in \FF_{f,Q}(\nu) $ be the periodic point of $ \Phi_{f,Q}$ corresponding to $ s_0 $. By Lemma~\ref{le:distance}, $ W^{s}_{loc}(x_0) $ intersects both lines $ \theta = \pm \lambda(f) \pi/2 $ at points with $ s $-coordinate contained in $ B(s_0,\eta/3) $. Since each $ \pi_{\theta} \left(\Phi^{j_i+m_i}_{f,Q}(V_{x_i})\right) $ is contained in the strip $ |\theta | < \lambda(f) \pi/2 $, it follows that for each $ i=1,2 $,
\[
W^s_{loc}(x_0) \cap \Phi^{j_i+m_i}_{f,Q}(V_{x_i}) \neq \emptyset.
\] 	
We can now apply Theorem~\ref{th:erg-periodic} to $ \mu_{n_1} $ and $ \mu_{n_2} $, and conclude that $ n_1 = n_2 $, i.e., $ \mu_{n_1}=\mu_{n_2} $.
\end{proof}

\begin{lemma}
\label{le:2}
$\# H_{f,Q}(\nu) =1$.
\end{lemma}

\begin{proof} 
By Lemma~\ref{le:tre}, it is enough to prove that $ H_{f,Q}(\nu) \neq \emptyset $. Let $ x \in \FF_{f,Q}(\nu) $. Since $ x $ is a hyperbolic periodic point, we have $ x \in D^{-}_{f,Q,\epsilon} $. Parts~(5) of Theorem~\ref{th:SRB} applied to $ x $ implies that $ \Phi_{f,Q} $ has an SRB measure $ \tilde{\mu} $. Since $ \FF_{f,Q}(\nu) \subset W(\nu) $, it follows from Proposition~\ref{pr:trapping3} that $ \supp \tilde{\mu} $ is contained in the closure of $ W(\nu) $. By the ergodic decomposition of $ \tilde{\mu} $ (see~Part~(4) of Theorem~\ref{th:SRB}), there exists $ \mu \in \SRB(\Phi_{f,Q}) $ such that   
\[ 
\supp \mu \subset \supp \tilde{\mu} \subset \overline{W(\nu)} = U(\nu) \times \left[-\frac{\pi}{2}\lambda(f),\frac{\pi}{2}\lambda(f)\right].
\] 
Now, by Part~(7) of Theorem~\ref{th:SRB}, $ \tilde{\mu}(\supp \mu \cap \partial K_{f,Q})=0 $. Since $ \partial K_{f,Q} \subset S^1 \times \{-\pi \lambda(f)/2,\pi \lambda(f)/2 \} $, we have $ \mu(\supp \mu \cap W(\nu))=1 $. We conclude that $ \mu \in H_{f,Q}(\nu) $.
\end{proof}

Lemmas~\ref{le:tre} and~\ref{le:2} allow us to define $ \Theta_{f,Q} \colon \SRB(\psi_{P}) \to \SRB(\Phi_{f,Q}) $ by $ \Theta_{f,Q}(\nu) = \mu $ with $\mu \in H_{f,Q}(\nu) $. Next, we prove that $\Theta_{f,Q} $ is a bijection. 

\begin{lemma}
$\Theta_{f,Q}$ is one-to-one.
\end{lemma}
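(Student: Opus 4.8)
The plan is to show that $\Theta$ is injective by proving that if $\mu_1,\mu_2\in\SRB(\psi_P)$ are distinct and $\Theta(\mu_1)=\Theta(\mu_2)=:\mu'$, then a contradiction arises. The key tool will be the trapping regions: by Lemma~\ref{lem:R(mu) invariant} each $R(\mu_i)$ is forward invariant under $\Phi_{f,P'}$, and by the disjointness~\eqref{eq: R mu disjoint} we have $R(\mu_1)\cap R(\mu_2)=\emptyset$ whenever $\mu_1\neq\mu_2$. So the whole argument reduces to locating $\supp(\mu')$ inside the appropriate trapping region.

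First I would use the Remark following the definition of $\HH(\mu)$, which records that $\mu'$ being related to $\mu_i$ is equivalent to $\supp(\mu')\subset R(\mu_i)$. Since $\mu'=\Theta(\mu_i)$ means precisely that $\mu'$ is related to $\mu_i$ (for $i=1,2$), this gives simultaneously
$$
\supp(\mu')\subset R(\mu_1)
\quad\text{and}\quad
\supp(\mu')\subset R(\mu_2).
$$
Therefore $\supp(\mu')\subset R(\mu_1)\cap R(\mu_2)$. But $\supp(\mu')$ is nonempty (it is the support of a probability measure), while $R(\mu_1)\cap R(\mu_2)=\emptyset$ by~\eqref{eq: R mu disjoint}. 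This is the desired contradiction, so $\mu_1=\mu_2$ and $\Theta$ is one-to-one.

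The main obstacle, and the only genuinely nontrivial input, is the equivalence stated in the Remark: that \emph{$\mu'$ related to $\mu$} is the same as \emph{$\supp(\mu')\subset R(\mu)$}. I would either invoke that Remark directly (it is asserted to follow from Lemmas~\ref{le:1}--\ref{le:onto}) or, to be self-contained, argue the containment as follows. Since $\mu'$ is related to $\mu$, there is $y\in E(\mu')\cap D^-_{\bar\omega}$ whose local unstable manifold, after finitely many iterates of $\Phi_{f,P'}$, meets $W^s_{loc}(F')$; as $F'\subset R(\mu)$ and $R(\mu)$ is forward invariant, the ergodic-component description from Theorem~\ref{th:sataev} forces $\supp(\mu')$ to lie in the closure of the orbit inside $R(\mu)$, hence in $R(\mu)$. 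Once this containment is available the proof is a one-line intersection argument, so the real content has already been carried by the construction of the disjoint forward-invariant regions $R(\mu)$ in the previous subsection.
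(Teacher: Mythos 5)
Your proposal is correct and follows essentially the same route as the paper: the paper's proof is precisely the observation that $\supp(\mu')\subset R(\mu_1)\cap R(\mu_2)$ is non-empty while the trapping regions of distinct acips are disjoint by construction. Your additional discussion of why $\supp(\mu')\subset R(\mu)$ holds (via the Remark and the forward invariance of $R(\mu)$) fills in a step the paper also takes for granted at this point, so there is no substantive difference.
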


\begin{proof}
Suppose that there exist two distinct mesures $\nu_1, \nu_2 \in \SRB(\psi_{P}) $ such that $ \mu =\Theta_{f,Q}(\nu_1)=\Theta_{f,Q}(\nu_2) $. Then $ \mu(W(\nu_1) \cap W(\nu_2))=1 $,
contradicting $ W(\nu_1) \cap W(\nu_2) = \emptyset $ (see Proposition~\ref{pr:trapping3}).
\end{proof}  

\begin{lemma}
\label{le:onto}
$\Theta_{f,Q}$ is onto.
\end{lemma}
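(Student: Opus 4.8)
The plan is to deduce surjectivity from a covering statement for the trapping regions, exploiting that each $R(\mu)$ is forward invariant. First I would reduce the claim: by Lemma~\ref{le:1} and the remark identifying ``$\mu'$ related to $\mu$'' with ``$\supp(\mu')\subset R(\mu)$'', it is enough to show that every $\mu'\in\SRB(\Phi_{f,P'})$ satisfies $\supp(\mu')\subset R(\mu)$ for some $\mu\in\SRB(\psi_{P})$. The first step is an all-or-nothing observation: since Lemma~\ref{lem:R(mu) invariant} gives $\Phi_{f,P'}(R(\mu))\subset R(\mu)$, the indicator $\mathbf 1_{R(\mu)}$ is dominated by $\mathbf 1_{R(\mu)}\circ\Phi_{f,P'}$, so integrating against the invariant measure $\mu'$ forces $R(\mu)$ to be invariant $(\bmod\,0)$; ergodicity of $\mu'$ then yields $\mu'(R(\mu))\in\{0,1\}$. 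As the regions are pairwise disjoint by~\eqref{eq: R mu disjoint}, at most one carries full $\mu'$-measure, and $\mu'(R(\mu))=1$ already gives $\supp(\mu')\subset\overline{R(\mu)}$. Thus everything reduces to proving
\[
\mu'\left(\bigcup_{\mu\in\SRB(\psi_{P})} R(\mu)\right)=1 .
\]

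To locate the correct region, I would next exploit the unstable growth together with the slap-map basins. Pick $y\in E(\mu')\cap D^{-}_{\bar\omega}$ (available by Theorem~\ref{th:sataev}) and apply Lemma~\ref{epsilon:expansion} to its horizontal local unstable manifold $W^{u}_{loc}(y)$, producing a subsegment $\Gamma'$ and $k\ge 1$ with $\Gamma:=\Phi_{f,P'}^{k}(\Gamma')$ horizontal of length at least $\eta$; being a piece of a global unstable manifold through a point of the full-measure set $E(\mu')$, this $\Gamma$ carries positive unstable-conditional mass. Its projection to the arclength coordinate is an interval $J$ with $\ell(J)\ge\eta$. Because the basins of $\mu_1,\dots,\mu_k$ cover $[0,1]\ (\bmod\,0)$ by Part~(4) of Theorem~\ref{th:acip}, some subinterval $J_0\subset J$ sits in the basin of one $\mu_i$. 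Then the covering property of Corollary~\ref{co:ep}, combined with exactness of the mixing components (Parts~(2)--(3) of Theorem~\ref{th:acip}), makes a forward image $\psi_{P}^{N}(J_0)$ contain an entire mixing component of $\mu_i$, and hence a periodic point $p\in F\subset\supp(\mu_i)$; here I would arrange, using Proposition~\ref{pr:density}, that the $\eta$-dense set $F$ contains a periodic point in each mixing component.

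The final step transfers this to the billiard map: since $\Phi_{0,P}$ restricted to $\{\theta=0\}$ is exactly $\psi_{P}$ and $\Phi_{f,P'}\to\Phi_{0,P}$ as $(f,P')\to(0,P)$, the forward $\Phi_{f,P'}$-iterates of $\Gamma$ are horizontal segments shadowing the $\psi_{P}$-iterates of $J$, so a forward image of a subsegment of $\Gamma$ projects over a neighbourhood of $p$ and must cross the stable manifold $W^{s}_{loc}(p')$ of the continuation $p'\in F'$, which is uniformly long and transversal to the horizontal by Lemma~\ref{per:orb:cont}. This shows $\mu'$ is related to $\mu_i$, giving $\mu'(R(\mu_i))=1$ and closing the argument; as a byproduct the assignment $\mu'\mapsto\mu_i$ inverts $\Theta$.

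The main obstacle I anticipate is the order of quantifiers in this shadowing: the number $N$ of slap iterates needed to cover a mixing component depends on $J_0$, hence on the given $\mu'$, whereas $\lambda_0$ and $\mathscr V$ must be fixed beforehand, so a naive $N$-step $C^0$-tracking of a single orbit will not be uniform. My intended remedy is to keep the growth \emph{intrinsic} to $\Phi_{f,P'}$ by iterating Lemma~\ref{epsilon:expansion} rather than tracking a $\psi_{P}$-orbit, and to use the $\{0,1\}$-dichotomy above to confine the forward orbit of $\Gamma$ to $\supp(\mu')$; with this confinement only the single final crossing needs control, and it follows from the elementary fact that a horizontal segment longer than $\eta$ lying over $\supp(\mu_i)$ necessarily meets the $\eta$-dense family of nearly vertical stable manifolds $W^{s}_{loc}(F')$, for which the uniform transversality and length estimates of Lemma~\ref{per:orb:cont} suffice.
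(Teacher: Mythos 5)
There is a genuine gap, and it sits exactly at the point you flag as the ``main obstacle'': your remedy for the non-uniformity does not work. After Lemma~\ref{epsilon:expansion} you have a horizontal unstable segment $\Gamma$ of length at least $\eta$, contained (via Theorem~\ref{th:sataev}(5)) in the ergodic component of $\mu'$. Your final crossing argument then invokes ``the elementary fact that a horizontal segment longer than $\eta$ lying over $\supp(\mu_i)$ necessarily meets the $\eta$-dense family $W^{s}_{loc}(F')$.'' But nothing in your argument places $\Gamma$ over $\supp(\mu_i)$: the set $F$ is only $\eta$-dense \emph{in} $\supp(\mu_i)$, not in $[0,1]$, and for, say, a $d$-gon with $d\ge 7$ the supports of the $d$ acip's are disjoint and their complement can contain intervals longer than $\eta$. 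Confining the forward orbit of $\Gamma$ to $\supp(\mu')$ is circular -- relating $\supp(\mu')$ to some $\supp(\mu_i)$ is precisely what surjectivity requires. A secondary flaw in the same chain: Part~(4) of Theorem~\ref{th:acip} gives that the basins cover $[0,1]$ only $(\bmod\ 0)$, and a full-measure set need not contain any interval, so ``some subinterval $J_0\subset J$ sits in the basin of one $\mu_i$'' is unjustified; and even granting it, a forward image $\psi_P^N(J_0)$ of a basin interval need not contain a whole mixing component.

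The paper resolves the quantifier problem in a way you do not reproduce: it fixes, \emph{before} choosing $\lambda_0$ and $\mathscr V$, a finite $\eta/2$-dense subset $\{z_1,\dots,z_r\}$ of the whole interval $[0,1]$ consisting of basin points, and uses the Birkhoff ergodic theorem (applied to the indicator of a small interval inside $\sint\supp(\mu)$) to produce for each $z_i$ a \emph{fixed} time $t_i$ with $\psi_P^{t_i}(z_i)\in\sint\supp(\mu_{j_i})$. Since $r$ and $\max_i t_i$ depend only on $P$, the closeness of $\Phi_{f,P'}$ to $\Phi_{0,P}$ over these boundedly many iterates can be arranged in advance; the projection of the length-$\eta$ segment must contain some $z_i$ in its interior, whence $\Phi_{f,P'}^{m+t_i}(W^u_{loc}(y))$ meets $R(\mu_{j_i})$ in an interval, and the argument of Lemma~\ref{le:2} finishes. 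Your $\{0,1\}$-dichotomy reduction via forward invariance of $R(\mu)$ and ergodicity of $\mu'$ is correct and a clean alternative framing of the first step, but without the fixed finite dense set of basin points with uniformly bounded transit times (or some substitute for it), the crucial localization of the unstable segment into some $R(\mu_i)$ is missing.
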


\begin{proof} 
We prove that given $ \mu \in \SRB(\Phi_{f,Q}) $, there exists $ \nu \in \SRB(\psi_P) $ such that $ \mu \in H_{f,Q}(\nu) $, i.e., $ \mu(\supp \mu \cap W(\nu))=1 $.

Pick $ x \in \Delta(\mu) $, and let $ V_x $ be the open disk of $ W^u_{loc}(x) $ as in Definition~\ref{de:typical}. By Lemma~\ref{le:growth}, there exists an integer $ i > 0 $ such that  $\Phi_{f,Q}^i(V_x)$ contains a horizontal segment $ \Gamma_1 $ with $ |\Gamma_1|>\eta $. 

Let $ \GG $ be the $ \eta/2 $-dense set in $ S^1 $ as in Lemma~\ref{le:g}. Since $ \Gamma_1 $ is contained in $ S^1 \times (-\pi \lambda(f)/2,\pi \lambda(f)/2) $, we have $ \Gamma_1 \cap \Sigma(s,\delta_5) \neq \emptyset $ for some $ s \in \GG $. By Lemma~\ref{le:g}, there exist an integer $ k \ge 0 $, a measure $ \nu \in \SRB(\psi_P) $ and a horizontal segment $ \Gamma_2 \subset \Gamma_1 \cap \Sigma(s,\delta_5) $ such that $ \Gamma':=\Phi^{k}_{f,Q}(\Gamma_2) $ is a horizontal segment contained in $ W(\nu) $. 

Combining the previous conclusions, we obtain that there is a horizontal segment $ \Gamma_0 \subset V_x $ such that $ \Gamma'=\Phi^{k+i}_{f,Q}(\Gamma_0) \subset W(\nu) $. Now, recall that $ \Vol_{V_x}(V_x \cap R(\mu))=1 $, and that $ R(\mu) $ is $ \Phi_{f,Q} $-invariant. Since the push-forward of $ \Vol_{\Gamma_0} $ by $ \Phi^{k+i}_{f,Q} $ is equivalent to $ \Vol_{\Gamma'} $, it follows that $ \Vol_{\Gamma'}(\Gamma' \cap R(\mu))=1 $. But $ \Gamma' \subset W(\nu) $, and so $ R(\mu) \cap W(\nu) \neq \emptyset $. 

We claim that $ \mu(R(\mu) \cap W(\nu))=1 $. Indeed, let $ A = R(\mu) \setminus W(\nu) $, and suppose that $ \mu(A)>0 $. Let $ z \in A $ be a $ \mu $-point of density of $ A $. Since $ K_{f,Q} \setminus W(\nu) $ is open, there are a compact neighborhood $ C $ of $ z $ and an open neighborhood $ O $ of $ z $ such that $ C \subset O \subset K_{f,Q} \setminus W(\nu) $. By Urysohn's Lemma, there is a continuous $ \phi \colon K_{f,Q} \to [0,1] $ such that $ \phi|_{C} \equiv 1 $ and $ \phi|_{K_{f,Q} \setminus O} \equiv 0 $. Let $ x \in R(\mu) \cap W(\nu) $. Using the function $ \phi $ as in the proof of Lemma~\ref{le:g}, one can show that $ \Phi^j_{f,Q}(x) \in O $ for some $ j \in \Nn $. In particular, $ \Phi^{j}_{f,Q}(x) \notin W(\nu) $. However, that is impossible, since $ W(\nu) $ is $ \Phi_{f,Q} $-forward invariant by Proposition~\ref{pr:trapping3}, and so $ \Phi^j_{f,Q}(x) \in W(\nu) $. Therefore, $ \mu(A)=0 $, whic is equivalent to $ \mu(R(\mu) \cap W(\nu))=1 $. Since $ \mu(R(\mu) \cap \supp \mu)=1 $, we conclude that $ \mu(\supp \mu \cap W(\nu))=1 $. 
\end{proof}


The previous propositions prove the following. 

\begin{theorem}
\label{th:main-partone}
Let $ P \in \PP^*_n $. There is $ \delta_5>0 $ such that if $ \lambda(f)<\delta_5 $ and $ d(Q,P)<\delta_5 $, then there exists a bijection $ \Theta_{f,Q} \colon \SRB(\psi_P) \to  \SRB(\Phi_{f,Q}) $. Moreover, for every $ \nu \in \SRB(\psi_P) $, the support of $ \Theta_{f,Q}(\nu) $ is contained in the closure of the trapping set $ W(\nu) $.  
\end{theorem}

The next corollary is a direct consequence of the previous theorem and the fact that the number of ergodic acips of $ \psi_P $ is bounded from above by the cardinality of the singular set $ S_P $, which is not larger than $ n $, the number of sides of $ P $.

\begin{corollary}
\label{co:main-partone}
Under the hypotheses of Theorem~\ref{th:main-partone}, the ergodic SRB measures of $ \Phi_{f,Q} $ enjoy the following properties: their number equals the number of the ergodic acips of $ \psi_P $, which is bounded from above by $ n $, and their supports are pairwise disjoint.
\end{corollary}

%
%

\subsection{Bernoulli components}
We now prove that for every $ \nu \in \SRB(\psi_P) $, the number of Bernoulli components of $ \Theta_{f,Q}(\nu) $ equals the number of exact components of $ \nu $. 

\begin{proposition}
\label{pr:period2}
Under the hypotheses of Theorem~\ref{th:main-partone}, if $ \nu \in \SRB(\psi_P) $, then the number of Bernoulli components of $ \Theta_{f,Q}(\nu) $ is a multiple of the number of exact components of $ \nu $.
\end{proposition}

\begin{proof}
Let $ \nu $ be an ergodic acip of $ \psi=\psi_P $, and suppose that $ \nu $ has $ m $ exact components. Accordingly, $ \psi^m $ has $ m $ exact invariant measures $ \nu_1,\ldots,\nu_m $ whose arithmetic average is equal to $ \nu $ and such that $ \psi_*(\nu_i)=\nu_{i+1} $ for each $ i = 1,\ldots,m-1 $, and $ \psi_* \nu_m = \nu_1 $. Of course, $ \nu_1,\ldots,\nu_m $ are ergodic acips of $ \psi^m $. In fact, they are the only ergodic acips of $ \psi^m $ with support contained in $ \supp \nu $.

The map $ \psi^m $ is piecewise expanding, and satisfies Condition~(*), since so does $ \psi $. Hence, Proposition~\ref{pr:boundaryacip} and Remark~\ref{re:separated} apply to the measures $ \nu_1,\ldots,\nu_m $, and so their supports are pairwise disjoint, and consist of finitely many closed intervals. Since $ \supp \nu = \bigcup^m_{i=1} \supp \nu_i $, the trapping set $ U(\nu) $ of $ \psi $ in Proposition~\ref{pr:trapping1} can be written as $ U(\nu)=\bigcup^m_{i=1} U_i $, where $ U_i $ is the subset of $ U(\nu) $ containing $ \supp \nu_i $. Since $ \psi_* $ permutes cyclically $ \nu_1,\ldots,\nu_m $, we have $ \psi(U_i) \subset U_{i+1} $ for each $ i=1,\ldots,m-1 $, and $ \psi(U_m) \subset U_1 $.

In view of the last conclusion, the trapping set $ W(\nu) $ of $ \Phi = \Phi_{f,Q} $ in Proposition~\ref{pr:trapping3} can be written as $ W(\nu) = \bigcup^m_{i=1} W_i $, where $ W_i = U_i \times (-\pi \lambda(f)/2,\pi \lambda(f)/2) $. Moreover, from the properties of $ U_1,\ldots,U_m $, it follows that $ \Phi(W_i) \subset W_{i+1} $ for each $ i=1,\ldots,m $, and $ \Phi(W_m) \subset W_1 $.

Suppose that $ \mu = \Theta_{f,Q}(\nu) $ has $ n $ Bernoulli components $ B_1,\ldots,B_n $. For every $ 1 \le i \le n $ and every $ 1 \le j \le m $, define $ B_{i,j} = B_i \cap W_j $. We claim that for each $ i $, there exists $ k $ such that $ \mu(B_{i,k}) = \mu(B_i) $. The proof of the claim is as follows. Since $ \Phi^m(W_j) \subset W_j $ for every $ j $ and $ \Phi^n(B_i) = B_i $ for every $ i $, each $ B_{i,j} $ is $ \Phi^{mn} $-forward invariant. Moreover, since $ W_1,\ldots,W_m $ are pairwise disjoint, so are $ B_{i,1}, \ldots,B_{i,m} $. But the normalization of $ \mu $ to $ B_i $ is mixing for $ \Phi^{mn} $, and so $ B_{i,1}, \ldots,B_{i,m} $ are $ \Phi^{mn} $-forward invariant and pairwise disjoint only if there exists $ k $ such that $ \mu(B_{i,k})=\mu(B_i) $.

Now, consider the set $ B_{i,k} $ such that $ \mu(B_{i,k})=\mu(B_i) $. By the invariance of $ \mu $, we have $ \mu(\Phi^n(B_{i,k}))=\mu(B_i) $, and so $ \mu(B_{i,k} \cap \Phi^n(B_{i,k})) = \mu(B_i)>0 $. Thus, there exists a nonempty set $ B \subset B_{i,k} $ such that $ \Phi^n(B) \in B_{i,k} \subset W_k $. Since $ W_k \cap \Phi^i(W_k) = \emptyset $ for all $ 1 \le i \le m-1 $, and $ \Phi^m(W_k) \subset W_k $, we can conclude that $ n $ must be a multiple of $ m $. 	
\end{proof}

\begin{theorem}
\label{th:bernoulli}
Under the hypotheses of Theorem~\ref{th:main-partone}, for every $ \nu \in \SRB(\psi_P) $, the number of Bernoulli components of $ \Theta_{f,Q}(\nu) $ equals the number of exact components of $ \nu $.
\end{theorem}

\begin{proof} 
Let $ \nu \in \SRB(\psi_P) $, and suppose that $ \nu $ has $ m $ exact components. Let $ E $ be the ergodic component of $ \Phi_{f,Q} $ corresponding to $ \mu=\Theta_{f,Q}(\nu) $. The measure $ \mu $ is the arithmetic average of $ n $ Bernoulli invariant measures of $ \Phi'=\Phi^n_{f,Q}|_{E} $. Let $ \mu' $ be one of these measures. Next, define the sets $ R' $ and $ \Delta' $ for the measure $ \mu' $ and the map $ \Phi^n $ exactly as the sets $ R_i $ and $ \Delta_i $ have been defined for the measure $ \mu_i $ and the map $ \Phi $ in Definitions~\ref{de:generic} and \ref{de:typical}. Finally, given $ x \in \Delta' $, let $ V_x $ be the open disk in $ W^u_{loc}(x) $ containing $ x $ as in Definition~\ref{de:typical}.

Remark~\ref{re:full} applies to $ \Delta' $ and $ \mu' $, and so $ \mu'(\Delta')=1 $. Let $ x \in \Delta' \cap W(\nu) $. By Lemma~\ref{le:due}, there exist an integer $ j \ge 0 $ and $ s_1 \in \FF(\nu) $ such that $ B(s_1,\eta/3) \subset \pi_s(\Phi'^j(V_x)) $, and by property~(3) of the definition of $ \FF(\nu) $, there is $ s_2 \in \FF \cap B(s_1,\eta/9) $ with $ s_2 \neq s_1 $ such that the great common divisor of $ s_1 $ and $ s_2 $ is $ m $.

Let $ x_1 $ and $ x_2 $ be the points in $ \FF_{f,Q}(\nu) $ corresponding to $ s_1 $ and $ s_2 $. Lemma~\ref{le:distance} guarantees that for each $ i=1,2 $, $ W^s_{loc}(x_i) $ intersects both lines $ \theta = - \pi \lambda(f)/2 $ and $ \theta = \pi \lambda(f)/2 $ at points $ x^-_i $ and $ x^+_i $ with $ s $-coordinate contained in $ B(s_i,2\eta/9) $. This together with $ s_2 \in B(s_1,\eta/9) $ implies that $ \pi_s(x^-_i) $ and $ \pi_{s}(x^+_i) $ are both contained in $ B(s_1,\eta/3) $, and so $ W^s_{loc}(x_i) $ intersects $ \Phi'^j(V_x) $. By Proposition~\ref{pr:period}, $ n $ is a divisor of $ m $. On the other hand, by Proposition~\ref{pr:period2}, $ n $ is a multiple of $ n $. We conclude that $ n=m $.
\end{proof}

\subsection{Basins of the ergodic SRB measures}
In~\cite[Theorem~5.1]{lxds}, we proved that for every polygon $ Q $  without parallel sides facing each other and for every reflection law $ f \in \RR^2 $ satisfying the additional condition $ f'>0 $, the ergodic SRB measures of $ \Phi_{f,Q} $ have the property that the union of their basins is a subset of $ M_Q $ of full $ \Vol $-measure. The extra hypothesis $ f'>0 $ was required to make sure that the billiard map admits SRB measures, and it can be safely replaced by the weaker condition $ f \in \RR^2 $, once we know that $ \Phi_{f,Q} $ does admit ergodic SRB measures. Accordingly, from~\cite[Theorem~5.1]{lxds}, we obtain the following.

\begin{theorem}
\label{th:basin}
Under the hypotheses of Theorem~\ref{th:main-partone}, the union of the basins of the ergodic SRB measures of $ \Phi_{f,Q} $ is a subset of $ M_Q $ of full $ \Vol $-measure.
\end{theorem}

\section{Billiards in regular polygons and triangles}
\label{sec:examples}


In this section, we apply Theorems~\ref{thm:main2} and~\ref{th:main3}
to billiards in convex regular polygons with an odd number of sides and to billiards in triangles. The case of the billiard in an equilateral triangle was first studied in \cite{arroyo12}.

\begin{lemma}
\label{lem:regpoly}
Each convex regular polygon with an odd number $ n $ of sides satisfies Condition~(*).
\end{lemma}

\begin{proof}
Let $ P $ be a convex regular polygon with an odd number $ n $ of sides. By~\cite[Lemma 3.1]{MDDGP13-1} the slap map $\psi_{P}$ is conjugated to the skew-product map $F_n \colon [0,1]\times\mathbb{Z}_n\to[0,1]\times\mathbb{Z}_n$ defined by 
\[
F_{n}(x,y)=\left(\phi_{n}(x),\sigma_x(y)\right)
\]
with
\[
\phi_{n}(x)=-\frac{1}{\cos(\frac{\pi}{n})} \left(x-\frac{1}{2}\right) \pmod 1,
\]
and
\[
\sigma_x(y)=
\begin{cases}
y - \left[\frac{n}{2}\right] & \text{if } x<\frac{1}{2} \\
y + \left[\frac{n}{2}\right] & \text{if } x>\frac{1}{2}
\end{cases}.
\]
%
It was proved in~\cite[Proposition 6.2 and Corollary 6.3]{MDDGP13} that the discontinuity point of $\phi_n$ is not pre-periodic. Since $\phi_n$ has a single discontinuity, $P$ satisfies Condition~(*). 
\end{proof}


%

\begin{corollary}
\label{co:regpoly}
Let $ P $ be a convex regular polygon with an odd number $n\geq3$ of sides. There exists $ \delta>0 $ such that if $ f\in \RR^2 $ with $ \lambda(f)<\delta $ and $ Q \in \PP_n $ with $ d(Q,P)<\delta $, then the following hold:
\begin{enumerate}
\item if $n=3$, then $\Phi_{f,Q}$ has a unique ergodic SRB measure, and this measure has a single Bernoulli component;
\item if $n=5$, then $\Phi_{f,Q}$ has a unique ergodic SRB measure, and this measure has two Bernoulli components; 
\item if $n\geq7$, then $\Phi_{f,Q}$ has exactly $n$ ergodic SRB measures. All these measures have $2^{m(n)}$ Bernoulli components, where $m(n)$ is the integer part of $ -\log_2(-\log_2 \cos(\pi/n)) $.
\end{enumerate}
\end{corollary}

\begin{proof}
By \cite[Theorem 1.1]{MDDGP13-1}, the slap map $ \psi_{P} $ has a unique ergodic acip if $ n=3 $ or $ n=5 $, and exactly $ n $ ergodic acips if $ n \ge 7 $. Moreover, every acip has $2^{m(n)}$ exact components, where $m(n)$ is the integer part of $ -\log_2(-\log_2 \cos(\pi/ n)) $. In particular, $m(3)=0$ and $m(5)=1$. The conclusion of the corollary now follows from Theorem~\ref{thm:main2} and Lemma~\ref{lem:regpoly}.
\end{proof}


Next, we consider billiards in triangles.

\begin{corollary}
\label{co:triangle2}
There exists a residual and full measure subset $\XX_3$ of the set of triangles $\PP_3$ with the following property: for any $P \in \XX_3$, there is $ \delta>0 $ such that if $ f\in \RR^2 $ with $ \lambda(f)<\delta $ and $ Q \in \PP_3 $ with $ d(Q,P)<\delta $, then the  billiard map $\Phi_{f,Q}$ has a unique ergodic SRB measure. This measure is Bernoulli if $P$ is acute, and has an even number of Bernoulli components, otherwise.
\end{corollary}

\begin{proof}
The corollary follows from Theorem~\ref{th:main3} and the ergodic properties of slap maps of triangles~\cite[Theorem 1.2]{MDDGP13-1}.
\end{proof}


\begin{corollary}
\label{co:triangle1}
Let $ P $ be an acute triangle. If $ \lambda \in \RR^2 $ and $ \lambda(f) $ is sufficiently small, then $\Phi_{f,P}$ has a unique ergodic SRB measure, and this measure has a single Bernoulli component.  
\end{corollary}

\begin{proof}
By \cite[Theorem 1.2]{MDDGP13-1}, the slap map of any acute triangle has a unique ergodic acip, which is also exact. Since acute triangles trivially satisfy Condition~(*), the wanted conclusion follows from Theorem~\ref{thm:main2}.
\end{proof}


Based on the previous two corollaries, we formulate the conjecture:

\begin{conjecture}
Let $ P $ be an obtuse triangle. If $ f \in \RR^2 $ and $ \lambda(f) $ is sufficiently small, then $\Phi_{f,P}$ has a unique ergodic SRB measure, and this measure has an even number of Bernoulli components.
\end{conjecture}

\section*{Acknowledgements}

The authors were partially funded by
the Project `New trends in Lyapunov exponents' (PTDC/MAT-PUR/29126/2017).
The authors JLD and JPG were partially supported by the Project CEMAPRE - UID/MULTI/00491/2019 financed by FCT/MCTES through national funds and PD through the strategic project PEst-OE/MAT/UI0209/2013.
The authors wish to thank an anonymous referee for reading carefully a previous version of the manuscript and making many useful remarks and suggestions.


\end{document}